%%%%Packages
\pdfoutput=1
\documentclass[11pt, a4paper, twoside,leqno]{amsart}
\usepackage[centering, totalwidth = 380pt, totalheight = 600pt]{geometry}
\usepackage[bbgreekl]{mathbbol}
\usepackage{amssymb, amsmath, amsthm}
\usepackage{microtype, stmaryrd, url, lmodern, eucal}
\usepackage[shortlabels]{enumitem}
\usepackage[latin1]{inputenc}
\usepackage{color}
\definecolor{darkgreen}{rgb}{0,0.45,0}
\usepackage[colorlinks,citecolor=darkgreen,linkcolor=darkgreen]{hyperref}
\usepackage[british]{babel}

%%%% Spacing

\lineskiplimit=0pt
\makeatletter
\def\@cite#1#2{[{#1\if@tempswa ,~#2\fi}]}% NEW
\makeatother

%%%% Colours

%
%
%
%

%%%% Fonts

\DeclareMathAlphabet{\mathbf}{OT1}{cmr}{b}{n}

%%%% Xypic

\usepackage[arrow, matrix, tips, curve, graph, rotate]{xy}
\SelectTips{cm}{10}

\makeatletter
\def\matrixobject@{%
  \edef \next@{={\DirectionfromtheDirection@ }}%
  \expandafter \toks@ \next@ \plainxy@
  \let\xy@@ix@=\xyq@@toksix@
  \xyFN@ \OBJECT@}
\let\xy@entry@@norm=\entry@@norm
\def\entry@@norm@patched{%
  \let\object@=\matrixobject@
  \xy@entry@@norm }
\AtBeginDocument{\let\entry@@norm\entry@@norm@patched}
\makeatother

\newcommand{\twocong}[2][0.5]{\ar@{}[#2] \save ?(#1)*{\cong}\restore}
\newcommand{\twoeq}[2][0.5]{\ar@{}[#2] \save ?(#1)*{=}\restore}
\newcommand{\rtwocell}[3][0.5]{\ar@{}[#2] \ar@{=>}?(#1)+/l 0.2cm/;?(#1)+/r 0.2cm/^{#3}}
\newcommand{\ltwocell}[3][0.5]{\ar@{}[#2] \ar@{=>}?(#1)+/r 0.2cm/;?(#1)+/l 0.2cm/^{#3}}
\newcommand{\ltwocello}[3][0.5]{\ar@{}[#2] \ar@{=>}?(#1)+/r 0.2cm/;?(#1)+/l 0.2cm/_{#3}}
\newcommand{\dtwocell}[3][0.5]{\ar@{}[#2] \ar@{=>}?(#1)+/u  0.2cm/;?(#1)+/d 0.2cm/^{#3}}
\newcommand{\dltwocell}[3][0.5]{\ar@{}[#2] \ar@{=>}?(#1)+/ur  0.2cm/;?(#1)+/dl 0.2cm/^{#3}}
\newcommand{\drtwocell}[3][0.5]{\ar@{}[#2] \ar@{=>}?(#1)+/ul  0.2cm/;?(#1)+/dr 0.2cm/^{#3}}
\newcommand{\dthreecell}[3][0.5]{\ar@{}[#2] \ar@3{->}?(#1)+/u  0.2cm/;?(#1)+/d 0.2cm/^{#3}}
\newcommand{\utwocell}[3][0.5]{\ar@{}[#2] \ar@{=>}?(#1)+/d 0.2cm/;?(#1)+/u 0.2cm/_{#3}}
\newcommand{\dtwocelltarg}[3][0.5]{\ar@{}#2 \ar@{=>}?(#1)+/u  0.2cm/;?(#1)+/d 0.2cm/^{#3}}
\newcommand{\utwocelltarg}[3][0.5]{\ar@{}#2 \ar@{=>}?(#1)+/d  0.2cm/;?(#1)+/u 0.2cm/_{#3}}

\newdir{(}{{}*!<0em,-.14em>-\cir<.14em>{l^r}}
\newdir{ (}{{}*!/-5pt/\dir{(}}
\newdir{ >}{{}*!/-5pt/\dir{>}}
\newcommand{\sh}[2]{**{!/#1 -#2/}}

%%%% Operators

\DeclareMathOperator{\adh}{adh}

\newcommand{\cat}[1]{\mathrm{\mathcal #1}}
\newcommand{\thg}{{\mathord{\text{--}}}}

\newcommand{\cd}[2][]{\vcenter{\hbox{\xymatrix#1{#2}}}}

%%%% Letters

\renewcommand{\phi}{\varphi}
\newcommand{\A}{{\mathcal A}}

\newcommand{\C}{{\mathcal C}}

\newcommand{\F}{{\mathcal F}}
\newcommand{\G}{{\mathcal G}}
\renewcommand{\H}{{\mathcal H}}

\newcommand{\M}{{\mathcal M}}

\renewcommand{\P}{{\mathcal P}}

\let\sec=\S
\renewcommand{\S}{{\mathcal S}}

%%%% Arrows

\newcommand{\xtor}[1]{\cdl[@1]{{} \ar[r]|-{\object@{|}}^{#1} & {}}}
\newcommand{\tor}{\ensuremath{\relbar\joinrel\mapstochar\joinrel\rightarrow}}

\makeatletter

\def\hookleftarrowfill@{\arrowfill@\leftarrow\relbar{\relbar\joinrel\rhook}}
\def\twoheadleftarrowfill@{\arrowfill@\twoheadleftarrow\relbar\relbar}
\def\leftbararrowfill@{\arrowdoublefill@{\leftarrow\mkern-5mu}\relbar\mapstochar\relbar\relbar}
\def\Leftbararrowfill@{\arrowdoublefill@{\Leftarrow\mkern-2mu}\Relbar\Mapstochar\Relbar\Relbar}
\def\leftringarrowfill@{\arrowdoublefill@{\leftarrow\mkern-3mu}\relbar{\mkern-3mu\circ\mkern-2mu}\relbar\relbar}
\def\lefttriarrowfill@{\arrowfill@{\mathrel\triangleleft\mkern0.5mu\joinrel\relbar}\relbar\relbar}
\def\Lefttriarrowfill@{\arrowfill@{\mathrel\triangleleft\mkern1mu\joinrel\Relbar}\Relbar\Relbar}

\def\hookrightarrowfill@{\arrowfill@{\lhook\joinrel\relbar}\relbar\rightarrow}
\def\twoheadrightarrowfill@{\arrowfill@\relbar\relbar\twoheadrightarrow}
\def\rightbararrowfill@{\arrowdoublefill@{\relbar\mkern-0.5mu}\relbar\mapstochar\relbar\rightarrow}
\def\Rightbararrowfill@{\arrowdoublefill@{\Relbar\mkern-2mu}\Relbar\Mapstochar\Relbar\Rightarrow}
\def\rightringarrowfill@{\arrowdoublefill@\relbar\relbar{\mkern-2mu\circ\mkern-3mu}\relbar{\mkern-3mu\rightarrow}}
\def\righttriarrowfill@{\arrowfill@\relbar\relbar{\relbar\joinrel\mkern0.5mu\mathrel\triangleright}}
\def\Righttriarrowfill@{\arrowfill@\Relbar\Relbar{\Relbar\joinrel\mkern1mu\mathrel\triangleright}}

\def\leftrightarrowfill@{\arrowfill@\leftarrow\relbar\rightarrow}
\def\mapstofill@{\arrowfill@{\mapstochar\relbar}\relbar\rightarrow}

\renewcommand*\xleftarrow[2][]{\ext@arrow 20{20}0\leftarrowfill@{#1}{#2}}
\providecommand*\xLeftarrow[2][]{\ext@arrow 60{22}0{\Leftarrowfill@}{#1}{#2}}
\providecommand*\xhookleftarrow[2][]{\ext@arrow 10{20}0\hookleftarrowfill@{#1}{#2}}
\providecommand*\xtwoheadleftarrow[2][]{\ext@arrow 60{20}0\twoheadleftarrowfill@{#1}{#2}}
\providecommand*\xleftbararrow[2][]{\ext@arrow 10{22}0\leftbararrowfill@{#1}{#2}}
\providecommand*\xLeftbararrow[2][]{\ext@arrow 50{24}0\Leftbararrowfill@{#1}{#2}}
\providecommand*\xleftringarrow[2][]{\ext@arrow 10{26}0\leftringarrowfill@{#1}{#2}}
\providecommand*\xlefttriarrow[2][]{\ext@arrow 80{24}0\lefttriarrowfill@{#1}{#2}}
\providecommand*\xLefttriarrow[2][]{\ext@arrow 80{24}0\Lefttriarrowfill@{#1}{#2}}

\renewcommand*\xrightarrow[2][]{\ext@arrow 01{20}0\rightarrowfill@{#1}{#2}}
\providecommand*\xRightarrow[2][]{\ext@arrow 04{22}0{\Rightarrowfill@}{#1}{#2}}
\providecommand*\xhookrightarrow[2][]{\ext@arrow 00{20}0\hookrightarrowfill@{#1}{#2}}
\providecommand*\xtwoheadrightarrow[2][]{\ext@arrow 03{20}0\twoheadrightarrowfill@{#1}{#2}}
\providecommand*\xrightbararrow[2][]{\ext@arrow 01{22}0\rightbararrowfill@{#1}{#2}}
\providecommand*\xRightbararrow[2][]{\ext@arrow 04{24}0\Rightbararrowfill@{#1}{#2}}
\providecommand*\xrightringarrow[2][]{\ext@arrow 01{26}0\rightringarrowfill@{#1}{#2}}
\providecommand*\xrighttriarrow[2][]{\ext@arrow 07{24}0\righttriarrowfill@{#1}{#2}}
\providecommand*\xRighttriarrow[2][]{\ext@arrow 07{24}0\Righttriarrowfill@{#1}{#2}}

\providecommand*\xmapsto[2][]{\ext@arrow 01{20}0\mapstofill@{#1}{#2}}
\providecommand*\xleftrightarrow[2][]{\ext@arrow 10{22}0\leftrightarrowfill@{#1}{#2}}
\providecommand*\xLeftrightarrow[2][]{\ext@arrow 10{27}0{\Leftrightarrowfill@}{#1}{#2}}

\makeatother

%%%% Theorems, etc

\numberwithin{equation}{section}

\theoremstyle{plain}
\newtheorem{Thm}{Theorem}
\newtheorem{Prop}[Thm]{Proposition}
\newtheorem{Cor}[Thm]{Corollary}
\newtheorem{Lemma}[Thm]{Lemma}
\newtheorem*{Thm*}{Theorem}

\theoremstyle{definition}
\newtheorem{Defn}[Thm]{Definition}

\newcommand{\mnd}[1]{\mathbb{#1}}

\newcommand{\flt}{\mnd{F}}
\newcommand{\uf}{\bbbeta}
\newcommand{\ps}{\mnd{P}}
\newcommand{\vi}{\mnd{V}}
\newcommand{\ms}{\omega}
\newcommand{\us}{\nu}
\newcommand{\mt}{\mu}
\newcommand{\ut}{\eta}

\begin{document}
\leftmargini=2em 
\title[The Vietoris monad and weak distributive laws]{The Vietoris monad and\\weak distributive laws}
\author{Richard Garner} 
\address{Centre of Australian Category Theory, Macquarie University, NSW 2109, Australia} 
\email{richard.garner@mq.edu.au}

%  \subjclass[2000]{Primary: }
\date{\today}

\thanks{The support of Australian Research Council grants
  DP160101519 and FT160100393 is gratefully acknowledged.}

\begin{abstract}
  The Vietoris monad on the category of compact Hausdorff spaces is a
  topological analogue of the power-set monad on the category of sets.
  Exploiting Manes' characterisation of the compact Hausdorff spaces
  as algebras for the ultrafilter monad on sets, we give precise form
  to the above analogy by exhibiting the Vietoris monad as induced by
  a weak distributive law, in the sense of B\"ohm, of the power-set
  monad over the ultrafilter monad.
\end{abstract}
\maketitle
\newcommand{\KH}{\cat{K\H aus}}
\section{Introduction}
In his 1922 paper~\cite{Vietoris1922Bereiche}, Vietoris described how
the set of closed subspaces of a compact Hausdorff space $X$ can
itself be made into a compact Hausdorff space, now often referred to
as the \emph{Vietoris hyperspace} $VX$. The Vietoris construction is
important not just in topology, but also in theoretical computer
science, where its various generalisations provide different notions
of \emph{power~domain}~\cite{Smyth1983Power}, and in general algebra,
where its restriction to zero-dimensional spaces links up under Stone
duality with the theory of \emph{Boolean algebras with
  operators}~\cite{Jonsson1951Boolean}.

The assignation $X \mapsto VX$ in fact underlies a monad $\vi$ on the
category $\KH$ of compact Hausdorff spaces. This monad structure was
sketched briefly by Manes in~\cite[Exercise
I.5.23]{Manes1976Algebraic}, but received its first detailed treatment
by Wyler in~\cite{Wyler1981Continuous}; in particular, Wyler
identified the $\vi$-algebras as Scott's \emph{continuous
  lattices}~\cite{Scott1972Continuous}.% ; under this
% identification, a continuous lattice $X$ becomes a compact Hausdorff
% space by equipping it with its Lawson topology, and a $\vi$-algebra via
% the continuous map $VX \rightarrow X$ taking infima of closed
% subspaces.

% Subsequently, there have been a number of accounts of the Vietoris
% monad which give different abstract explanations for the form it
% takes~\cite{}. However, there is one particular connection which has,
% to date, remained unexplored. 
Clearly, the Vietoris monad is related to the \emph{power-set} monad
$\ps$ on the category of sets. In both cases, the monad unit and
multiplication are given by inclusion of singletons and
by set-theoretic union; and both underlying functors are
``power-object'' constructions---differing in the
distinction between \emph{closed} subspaces and \emph{arbitrary}
subsets, and in the need to topologise in the former case.

In this article, we give a new account of the Vietoris monad on $\KH$
which explains its similarities with the power-set monad on
$\cat{Set}$ by deriving it from it in a canonical way; for good
measure, this account also renders the slightly delicate topological
aspects of the Vietoris construction entirely automatic.

The starting point is Manes' result~\cite{Manes1969A-triple}
identifying compact Hausdorff spaces as the algebras for the
ultrafilter monad $\uf$ on $\cat{Set}$. In light of this, we recognise our
situation as the following one: we have a monad---namely, the
power-set monad $\ps$ on sets---which we would like to ``lift''
appropriately to the category of algebras for another monad on the
same base---namely, the ultrafilter monad $\uf$.

% ; under this identification, a
% compact Hausdorff space $X$ is made into a $\uf$-algebra via the
% function $BX \rightarrow X$ sending an ultrafilter to its unique
% point of convergence.
At this point, the categorically-minded reader will doubtless think of
Beck's theory~\cite{Beck1969Distributive} of distributive laws. For
monads $\mnd{S}$, $\mnd{T}$ on $\C$, a \emph{distributive law}
of $\mnd{S}$ over $\mnd{T}$ is a natural transformation
$\delta \colon TS \Rightarrow ST$ satisfying four axioms expressing
compatibility with the monad structures of $\mnd{S}$ and
$\mnd{T}$. As we will recall in
Section~\ref{sec:distributive-laws-1} below, distributive laws
correspond to \emph{liftings} of $\mnd{S}$ to a monad on the
category of $\mnd{T}$-algebras, and also to \emph{extensions} of
$\mnd{T}$ to a monad on the Kleisli category of
$\mnd{S}$.% , where by
% saying that $\tilde{\mnd{S}}$ lifts $\mnd{S}$ or that
% $\tilde{\mnd{T}}$ extends $\mnd{T}$, we mean to say that the
% appropriate diagrams below commute, together with the corresponding
% ones for the monad units and multiplications.
% \begin{equation}\label{eq:1}
%   \cd[@-0.7em]{
%     {\C^\mnd{T}} \ar[r]^-{\tilde{S}} \ar[d]_{U^\mnd{T}} &
%     {\C^\mnd{T}} \ar[d]^{U^\mnd{T}} & & & 
%     {\C} \ar[r]^-{T} \ar[d]_{F_\mnd{S}} &
%     {\C} \ar[d]^{F_\mnd{S}} \\
%     {\C} \ar[r]^-{S} &
%     {\C} & & & 
%     {\C_\mnd{S}} \ar[r]^-{\tilde T} &
%     {\C_{\mnd{S}}}
%   }
% \end{equation}

In particular, % taking our monads to be the power-set monad
% $\ps$ and the ultrafilter monad $\bbbeta$ on $\cat{Set}$, 
we can ask: is there a distributive law of $\ps$ over $\uf$ for which
the associated lifting of $\ps$ to $\KH$, the category of
$\uf$-algebras, is the Vietoris monad? Unfortunately, the answer to
this question is \emph{no}, since the kind of lifting mandated by the
theory of distributive laws is too strong; if the Vietoris monad did
lift the power-set monad in this sense, then the underlying set of
$VX$ would comprise the full power-set of $X$, rather than just the
closed subsets.

However, we are clearly very close to having a lifting of $\ps$ to
$\uf$-algebras; and, in fact, we are also very close to having a
distributive law of $\ps$ over $\uf$. For indeed, such a distributive
law would be the same as an extension of $\uf$ to the Kleisli category
of $\ps$, which is the category $\cat{Rel}$ of sets and relations; and
the extension of structure from $\cat{Set}$ to $\cat{Rel}$ was
analysed in detail by Barr~\cite{Barr1970Relational}. As observed
in~\cite[\sec 2.11]{Trnkova1977Relational}, it is a direct consequence
of Barr's results that:
\begin{itemize}[itemsep=0.25\baselineskip]
\item A functor $F \colon \cat{Set} \rightarrow \cat{Set}$ has
  \emph{at most} one extension to a locally monotone functor
  $\tilde F \colon \cat{Rel} \rightarrow \cat{Rel}$, which exists just
  when $F$ is weakly cartesian;%  (i.e., preserves weak pullbacks);
 \item If $F,G$ are weakly cartesian, then
  $\alpha \colon F \Rightarrow G$ has \emph{at most} one extension to
  a natural transformation
  $\tilde \alpha \colon \tilde F \Rightarrow \tilde G$, existing
  just when $\alpha$ is weakly cartesian.%  (i.e., its naturality squares
 % are weak pullbacks).
\end{itemize}
 (The definition of weak cartesianness is recalled in
 Section~\ref{sec:extend-struct-from} below.)
% in modern
% terms, he showed that, when $\cat{Rel}$ is seen as a locally ordered
% $2$-category with as $2$-cells the inclusions of relations, we have:
% \begin{enumerate}[(i),itemsep=0.25\baselineskip]
% \item Every $F \colon \cat{Set} \rightarrow \cat{Set}$ extends
%   uniquely to an adjunction-preserving oplax $2$-functor
%   $\tilde F \colon \cat{Rel} \rightarrow \cat{Rel}$, which is a
%   genuine ($2$-)functor just when $F$ is \emph{weakly cartesian},
%   i.e., preserves weak pullbacks; and
% \item Every
%   $\alpha \colon F \Rightarrow G \colon \cat{Set} \rightarrow
%   \cat{Set}$ extends uniquely to an oplax natural transformation
%   $\tilde \alpha \colon \tilde F \Rightarrow \tilde G \colon \cat{Rel}
%   \rightarrow \cat{Rel}$, which is genuinely ($2$-)natural just when
%   $\alpha$ is \emph{weakly cartesian}, i.e., its naturality squares
%   are weak pullbacks.
% \end{enumerate}
% In particular, any monad $\mnd{T}$ on $\cat{Set}$ for which $T$,
% $\eta^\mnd{T}$ and $\mt$ are all weakly cartesian
% extends has a \emph{unique} extension to a locally monotone monad
% $\tilde{\mnd{T}}$ on $\cat{Rel}$, corresponding to a canonical
% distributive law of $\ps$ over $\mnd{T}$.% , and so a canonical
% lifting of $\ps$ to $\mnd{T}$-algebras. This is what
% happens, for example, when $\mnd{T}$ is the monoid monad, or the
% commutative monoid monad.
In the case of the ultrafilter monad $\uf$ on $\cat{Set}$, it is
well known that the underlying endofunctor and the monad
multiplication are weakly cartesian, and so extend; while the unit is
not, and so does not. This not-quite extension of $\uf$ to $\cat{Rel}$ turns
out to correspond to a not-quite distributive law
$\delta \colon \beta P \Rightarrow P\beta$, which is compatible with
both monad multiplications and the unit of $\ps$, but not with the unit of
$\uf$.

One perspective on this situation can be found
in~\cite{Clementino2004One-setting, Hofmann2014Lax-algebras,
  Tholen2019Lax-Distributive, Tholen2017Quantalic}. As was already
essentially observed in~\cite{Barr1970Relational}, the not-quite
extension of $\uf$ to $\cat{Rel}$ is an example of a \emph{lax} monad
extension in the sense of~\cite{Clementino2004One-setting}. It was
noted in~\cite[Exercise~1.I]{Hofmann2014Lax-algebras}, and confirmed
in~\cite{Tholen2019Lax-Distributive}, that such lax monad extensions
correspond to suitably-defined lax distributive laws, and further
explained in~\cite{Tholen2017Quantalic} that these correspond, in
turn, to suitable lax liftings. These facts are important for the area
of \emph{monoidal topology}; see, for example,~\cite{2014Monoidal}.

However, for the ends we wish to pursue here, a different point of
view is relevant. In 2009, with motivation from quantum algebra,
Street~\cite{Street2009Weak} and B\"ohm~\cite{Bohm2010The-weak}
introduced various notions of \emph{weak distributive law} of a monad
$\mnd{S}$ over a monad $\mnd{T}$, % \footnote{Actually, B\"ohm's
  % paper deals with \emph{mixed distributive laws} between a monad and
  % a comonad, but dualising to the purely monadic case is
  % straightforward and formal.} 
involving a natural transformation
$\delta \colon TS \Rightarrow ST$ satisfying Beck's original axioms
relating to the monad multiplications, but weakening in different ways
those relating to the monad units. Each of these kinds of weak
distributive law of $\mnd{S}$ over $\mnd{T}$ was shown to
correspond to a kind of ``weak lifting'' of $\mnd{S}$ to
$\mnd{T}$-algebras.

In particular, one of the kinds of weak distributive law involves
simply dropping from Beck's original notion the axiom relating to the
unit of $\mnd{T}$. Thus, the not-quite distributive law
$\delta \colon \beta P \Rightarrow P\beta$ we described above is a
weak distributive law, in this sense, of $\ps$ over $\uf$; and so
there is a corresponding weak lifting of $\ps$ to $\uf$-algebras. Our
main result identifies this weak lifting by proving:

\begin{Thm*}
  The Vietoris monad on the category of compact Hausdorff spaces is
  the weak lifting of the power-set monad 
  associated to the canonical weak distributive law
  of the power-set monad over the ultrafilter monad.
\end{Thm*}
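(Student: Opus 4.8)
The plan is to unwind the weak lifting explicitly and match it, piece by piece, against the Vietoris monad, using Manes' identification of $\KH$ with the $\uf$-algebras throughout to pass between topology and ultrafilter convergence. First I would recall the general construction of the weak lifting from a weak distributive law. For a $\uf$-algebra $(A,a)$---that is, a compact Hausdorff space with ultrafilter-limit map $a\colon\beta A\to A$---the composite $a'\defeq Pa\circ\delta_A\colon\beta PA\to PA$ is $\beta$-associative (this uses the axiom of $\delta$ compatible with the multiplication of $\uf$, which is retained), but fails to be unital, since the $\uf$-unit axiom is the one that has been dropped. A short naturality argument shows that the resulting defect $e_A\defeq a'\circ\eta^\beta_{PA}\colon PA\to PA$ is idempotent, and the weak lifting $\tilde\ps(A,a)$ is the splitting of $e_A$, equipped with the $\beta$-algebra structure $\tilde a\defeq p\circ a'\circ\beta i$ induced through a chosen splitting $PA\xrightarrow{p}\tilde\ps A\xrightarrow{i}PA$. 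That these data cohere into a monad on $\KH$ is B\"ohm's theorem, which I would simply invoke; the work is to identify the monad so produced.

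Next I would compute $e_A$. Barr's relational extension of $\beta$ presents $\delta$ as $\bar\beta$ applied to the membership relation, so that $\delta_A(\mathfrak U)=\{\mathfrak x\in\beta A : \bigcup\mathcal A\in\mathfrak x\text{ for every }\mathcal A\in\mathfrak U\}$; evaluating at a principal ultrafilter yields $\delta_A(\eta^\beta_{PA}(S))=\{\mathfrak x\in\beta A : S\in\mathfrak x\}$, and hence $e_A(S)=\{a(\mathfrak x) : S\in\mathfrak x\}$. Since a point of a compact Hausdorff space lies in $\overline S$ exactly when it is the limit of some ultrafilter containing $S$, this identifies $e_A$ with the closure operator $S\mapsto\overline S$. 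Its splitting is therefore the set of closed subsets of $A$, so the underlying set of $\tilde\ps(A,a)$ is $VA$, with $p$ the closure map and $i$ the inclusion of the closed subsets.

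The heart of the proof is to show that the induced structure $\tilde a$ is the Vietoris limit map; Manes' theorem then forces $\tilde\ps(A,a)$ and $VA$ to coincide as compact Hausdorff spaces, a compact Hausdorff topology being determined by its $\uf$-algebra structure. Unwinding $\tilde a=p\circ a'\circ\beta i$ at an ultrafilter $\mathfrak F$ on $VA$ and writing $\mathfrak U=\beta i(\mathfrak F)$, the formula for $a'$ exhibits $a'(\mathfrak U)$ as the set of limits of the ultrafilters refining the filter generated by the sets $\bigcup\mathcal F$ with $\mathcal F\in\mathfrak F$; in a compact Hausdorff space this cluster set is already closed and equals $\bigcap_{\mathcal F\in\mathfrak F}\overline{\bigcup_{C\in\mathcal F}C}$. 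I would then check, by a finite-intersection and compactness argument tested against the Vietoris subbasic sets $\{C : C\subseteq U\}$ and $\{C : C\cap U\neq\emptyset\}$ for $U$ open, that $\mathfrak F$ converges in the Vietoris topology precisely to this closed set. Reconciling the abstract cluster-set formula with the Vietoris subbasic neighbourhoods is the genuinely topological step, and the one I expect to be the main obstacle.

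Finally I would match the monad operations. The lifted unit is $p\circ\eta^P$, sending $x$ to $\overline{\{x\}}=\{x\}$, which is the unit of $\vi$; and the lifted multiplication, obtained from the union map $\mu^P$ through the splittings, carries a closed family $\mathcal C\in V(VA)$ of closed sets to $\overline{\bigcup\mathcal C}=\bigcup\mathcal C$, the union of a closed---hence compact---family of closed subsets being closed, and this is the multiplication of $\vi$. Since these agree with the Vietoris operations on underlying sets and both are maps of the compact Hausdorff space $VA$ already identified, they agree in $\KH$, yielding $\tilde\ps=\vi$ as monads on $\KH$.
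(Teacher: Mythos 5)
Your proposal is correct, and its skeleton coincides with the paper's: compute the semialgebra structure $P\xi.\delta_X$ from the explicit formula for $\delta$, identify the unital defect $P\xi.\delta_X.\eta_{PX}$ as topological closure so that the splitting has underlying set $VX$, read off the unit and multiplication from the splitting data, and reduce everything to showing that each ultrafilter $\F \in \beta VX$ converges in the Vietoris topology precisely to $L = \bigcap_{A \in \F}\overline{\bigcup A}$ (this is the content of the paper's Lemma~\ref{lem:3} plus the final paragraph of the proof of Theorem~\ref{thm:1}). The one genuine divergence is how that convergence statement is established, and here you take a more elementary route. You propose a direct check against the Vietoris subbase by compactness and the finite intersection property; this does go through --- for $C^+$ with $L \cap C = \emptyset$, if $C^+ \notin \F$ then the nonempty closed sets $\overline{\bigcup A} \cap C$ for $A \in \F$ form a downward-directed family, so by compactness $L \cap C \neq \emptyset$, a contradiction; for $C^-$ the check is one line; and uniqueness of the limit follows from Hausdorffness of the Vietoris topology, which you should state explicitly --- but you leave this as a flagged sketch rather than carrying it out. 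The paper instead observes that the Vietoris topology on $VX$ is the Lawson topology on the continuous lattice $(VX, \supseteq)$ and invokes the general Lemma~\ref{lem:15} that an ultrafilter on a continuous lattice Lawson-converges to its $\liminf$. Your route is more self-contained at this point and avoids any continuous-lattice theory; the paper's detour is not wasted, however, since the same Lawson-topology lemma is reused in the identification of the $\vi$-algebras as continuous lattices (Theorem~\ref{thm:5}), which your argument would not provide for free.
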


As an application of this result, we obtain a simple new proof of
Wyler's characterisation of the $\mnd V$-algebras as the continuous
lattices; and we conclude the paper with remarks on possible
variations and generalisations of our main result.

\section{The monads}
\label{sec:monads}

\subsection{The power-set monads}
\label{sec:power-set-monads}
We begin  by recalling the various monads of interest and
their categories of algebras. Most straightforwardly, we
have:

\begin{Defn}
  \label{def:7}
  The power-set monad $\ps$ on $\cat{Set}$ has $PX$ given by the set
  of all subsets of $X$, and $Pf \colon PX \rightarrow PY$ given by
  direct image. The unit $\ut_X \colon X \rightarrow PX$ and
  multiplication $\mt_X \colon PPX \rightarrow PX$ are given by
  $\ut_X(x) = \{x\}$ and $\mt_X(\A) = \bigcup \A$.
\end{Defn}

% \begin{Defn}
%   \label{def:1}
%   The power-set monad $\ps$ on $\cat{Set}$ has underlying
%   endofunctor $P$ which takes a set $X$ to the set $PX$ of its
%   subsets, and takes a map $f \colon X \rightarrow Y$ to the direct
%   image map $f_! \colon PX \rightarrow PY$. The unit and
%   multiplication of $\ps$ are given by
%   \begin{align*}
%     \eta^\ps_X \colon X & \rightarrow PX & \mu^\ps_X
%     \colon PPX & \rightarrow PX\\
%     x & \mapsto \{x\} & A & \mapsto \textstyle\bigcup A\rlap{ .}
%   \end{align*}
% \end{Defn}

The $\ps$-algebras can be identified as complete lattices in two
different ways, depending on whether we view the $\ps$-algebra
structure as providing the sup operation or the inf operation; the
maps of the category of $\ps$-algebras are then respectively the
sup-preserving maps and the inf-preserving maps.

\subsection{The ultrafilter monad}
\label{sec:ultrafilter-monad-1}
Recall that a \emph{filter} on a set $X$ is a non-empty subset
$\F \subseteq PX$ such that, for all $A,B \subseteq X$, we have
$A,B \in \F$ iff $A \cap B \in \F$. A filter is an \emph{ultrafilter}
if it contains exactly one of $A$ and $X \setminus A$ for each
$A \subseteq X$. % By Zorn's lemma, any family of subsets
% $\J \subseteq P X$ with the \emph{finite intersection
%   property}---meaning that any finite intersection of them is
%   non-empty---can be extended to an ultrafilter.

 \begin{Defn}
  \label{def:2}
  The ultrafilter monad $\uf$ on $\cat{Set}$ has $\beta X$ given by the set of all ultrafilters on $X$, and
  $\beta f \colon \beta X \rightarrow \beta Y$ the function taking \emph{pushforward} along $f$:
  \begin{equation*}
    \F \mapsto f_!(\F) = \{ B \subseteq Y : f^{-1}(B) \in \F \} = \{ B \subseteq Y
    : f(A) \subseteq B \text{ for some } A \in \F\}\rlap{ .}
  \end{equation*}
  The unit $\ut_X \colon X \rightarrow \beta X$ and multiplication
  $\mt_X \colon \beta \beta X \rightarrow \beta X$ are defined by
  $\ut_X(x) = \{A \subseteq X : x \in A\}$ and
  $\mt_X(\mathbf{F}) = \{A \subseteq X : A^\# \in \mathbf{F}\}$, where
  for any $A \subseteq X$ we define
  $A^\# = \{ \F \in \beta X : A \in \F\}$.
  % \begin{align*}
  %   \ut_X \colon X & \rightarrow \beta X & \mt_X
  %   \colon \beta \beta X & \rightarrow \beta X\\
  %   x & \mapsto \{ A \subseteq X : x \in A\} & \F & \mapsto \{ A \subseteq X :
  %   A^\# \in \F\} 
  % \end{align*}
\end{Defn}

The algebras for the ultrafilter monad were identified by
Manes~\cite{Manes1969A-triple} as the compact Hausdorff spaces. Recall
that, for a topological space $X$, an ultrafilter $\F \in \beta X$ is
said to \emph{converge} to $x \in
X$ % (written $\F \rightsquigarrow x$)
if each neighbourhood of $x$ is in $\F$; and that, when $X$ is compact
Hausdorff, each $\F \in \beta X$ converges to a unique point
$\xi(\F)$. % ---see, for example, \cite[Proposition~I.8.1.1 and \sec
% I.9.1]{Bourbaki1966Elements}
Manes showed that the function
$\xi \colon \beta X \rightarrow X$ so determined endows the compact
Hausdorff $X$ with
$\uf$-algebra structure, and that \emph{every} $\uf$-algebra arises
thus. Under this identification, the $\uf$-algebra maps are
the continuous ones.

% Note that the monad morphism $\uf \rightarrow \flt$ induces a
% restriction functor from continuous lattices to compact Hausdorff
% spaces. By Lemma~\ref{lem:15} below, this functor views a continuous
% lattice $L$ as a compact Hausdorff space under its \emph{Lawson
%   topology}, generated by the following subbasic open sets for each
% $s \in L$:
% \begin{equation*}
%   s^+ = \{x \in L : s \ll x\} \qquad \text{and} \qquad s^- = \{x \in L
%   : s \nleqslant x\}\rlap{ .}
% \end{equation*}
%
%; to recovers the
% topology on $X$ from the $\uf$-algebra structure
% $\xi \colon \beta X \rightarrow X$, one defines its closure operator
% to be:
% \begin{equation*}
%   \overline A = \{\,x \in X : \text{there exists } \F \in \beta X
%   \text{ with $A \in \F$ and $\xi(\F) = x$}\,\}\rlap{ .}
% \end{equation*}

\subsection{The Vietoris monad}
\label{sec:vietoris-monad}
The \emph{Vietoris hyperspace}~\cite{Vietoris1922Bereiche} $VX$ of a
compact Hausdorff space $X$ is the set of all closed subspaces of $X$,
endowed with the topology (sometimes called the ``hit-and-miss''
topology) generated by the following subbasic open sets for each
$C \in VX$:
\begin{equation*}
  C^+ = \{A \in VX : A \cap C = \emptyset\} \qquad \text{and} \qquad C^- =
  \{A \in VX : A \nsubseteq C\}\rlap{ .}
\end{equation*}
% We remark that this is the Lawson topology on the continuous
% lattice $(VX, \supseteq)$.

\begin{Defn}
  \label{def:7}\cite{Wyler1981Continuous}
  The Vietoris monad $\vi$ on $\KH$ has $VX$ given as above, and
  $Vf \colon VX \rightarrow VY$ given by direct image. The unit
  $\ut_X \colon X \rightarrow VX$ and multiplication
  $\mt_X \colon VVX \rightarrow VX$ are given by $\ut_X(x) = \{x\}$
  and $\mt_X(\A) = \bigcup \A$.
\end{Defn}

It was shown in~\cite{Wyler1981Continuous} that the $\vi$-algebras are
the \emph{continuous lattices} of~\cite{Scott1972Continuous}. Recall
that, for elements $x,y$ of a poset $L$, we write $x \ll y$ if,
whenever $D \subseteq L$ is directed and
$y \leqslant \sup D$, there exists some $d \in D$ with
$x \leqslant d$. A \emph{continuous lattice} is a complete lattice $L$
such that every $x\in L$ satisfies $x = \sup\{ s : s \ll x \}$. 
Under Wyler's identification, a continuous lattice $L$ becomes a compact
Hausdorff space under its \emph{Lawson topology}, which is generated
by the subbasic open sets
\begin{equation*}
  s^+ = \{x \in L : s \ll x\} \qquad \text{and} \qquad s^- = \{x \in L
  : s \nleqslant x\} \qquad \text{for $s \in L$}\rlap{ ,}
\end{equation*}
and a
% $\vi$-algebra
% 
% 
% under the operation
% $\liminf \colon FX \rightarrow X$ defined by
% $\liminf \F = \sup\{\inf A : A \in \F\}$; under this identification,
% the $\flt$-algebra maps are the maps preserving all infs and directed
% sups.
% 
% once again the continuous lattices $L$, where such an $L$ becomes a
% compact Hausdorff space under its Lawson topology, and a
$\vi$-algebra
via the function $VL \rightarrow L$ taking infima of closed sets.

\section{Distributive laws and weak distributive laws}
\label{sec:distr-laws-weak}
% As explained in the introduction, a \emph{distributive
%   law}~\cite{Beck1969Distributive} between two monads $\mnd{S}$ and
% $\mnd{T}$ on the same category $\C$ is a natural transformation
% $\delta \colon TS \Rightarrow ST$ satisfying axioms which allow
% $\mnd{S}$ to be lifted to a monad on the category of
% $\mnd T$-algebras. We wish to describe the Vietoris monad as a lifting
% of the power-set monad $\ps$ to the category of $\uf$-algebras, that
% is, the category of compact Hausdorff spaces.
% 
% 
% , . In this section, we therefore explain 

\subsection{Distributive laws}
\label{sec:distributive-laws-1}
We now recall Beck's classical theory~\cite{Beck1969Distributive} of
\emph{distributive laws} and their associated liftings and extensions,
and the generalisation of this theory to \emph{weak distributive
  laws}~\cite{Bohm2010The-weak} which will be necessary for our main
result. We begin with Beck's original notion.
\begin{Defn}
  \label{def:4}
  Let $\mnd{S} = (S, \us, \ms)$ and $\mnd{T} = (T, \ut, \mt)$ be
  monads on a category $\C$. A \emph{distributive law of
    $\mnd{S}$ over $\mnd{T}$} is a natural transformation
  $\delta \colon TS \Rightarrow ST$ rendering commutative the four
  diagrams:
  \begin{equation*}
    \cd[@-0.5em@C-1em]{
      \sh{l}{0.2em}TSS \ar[r]^-{\delta S} \ar[d]_-{T\ms} &
      STS \ar[r]^-{S\delta} & \sh{r}{0.2em} SST \ar[d]^-{\ms T} & 
      \sh{l}{0.2em}TTS \ar[r]^-{T\delta} \ar[d]_-{\mt S} & 
      TST \ar[r]^-{\delta T} & \sh{r}{0.2em}STT \ar[d]^-{S\mt} & 
      & {T} \ar[dl]_-{T\us} \ar[dr]^-{\us T} & &
      & {S} \ar[dl]_-{\ut S} \ar[dr]^-{S\ut}
      \\
      TS \ar[rr]^-{\delta} & & ST & 
      {TS} \ar[rr]^-{\delta} & & {ST} & 
      \sh{r}{0.2em}{TS} \ar[rr]^-{\delta} & & 
      \sh{l}{0.2em}{ST} & 
      \sh{r}{0.2em}{TS} \ar[rr]^-{\delta} & & 
      \sh{l}{0.2em}{ST}\rlap{.}
    }
  \end{equation*}
\end{Defn}

% , together with the relevant material
% from~\cite{Barr1970Relational} dealing with extension of structure
% from $\cat{Set}$ to $\cat{Rel}$.
% \subsection{Distributive laws}
% \label{sec:distributive-laws}
% We begin by summarising from~\cite{Beck1969Distributive,
%   Street1972The-formal, Meyer1975Induced} the basic theory of
%   \emph{distributive laws}.

The basic result about distributive laws is that they correspond both
to liftings and to extensions, in the sense of the following definition.
\begin{Defn}
  \label{def:3}
  Let $\mnd{S} = (S, \us, \ms)$ and $\mnd{T} = (T, \ut, \mt)$ be
  monads on a category $\C$. If we write
  $U^\mnd{T} \colon \C^\mnd{T} \rightarrow \C$ for the forgetful
  functor from the category of $\mnd T$-algebras, then a
  \emph{lifting} of $\mnd S$ to $\C^\mnd{T}$ is a monad
  $\tilde{\mnd S}$ on $\C^\mnd{T}$ such that
  \begin{equation*}
    U^\mnd{T} \circ \tilde S = S \circ U^\mnd{T} \qquad
    U^\mnd{T} \circ \tilde \us = \us \circ U^\mnd{T} \quad \text{and} \quad
    U^\mnd{T} \circ \tilde \ms = \ms \circ U^\mnd{T}\rlap{ .}
  \end{equation*}
  On the other hand, if we write
  $F_{\mnd S} \colon \C \rightarrow \C_{\mnd S}$ for the free functor
  into the Kleisli category of $\mnd S$, then an \emph{extension} of
  $\mnd{T}$ to $\C_\mnd{S}$ is a
  monad $\tilde{\mnd T}$ on $\C_{\mnd S}$ such that
  \begin{equation*}
    \tilde T \circ F_{\mnd S} = F_{\mnd S} \circ T \qquad
    \tilde \ut \circ F_{\mnd S} = F_{\mnd S} \circ \ut \quad \text{and} \quad
    \tilde \mt \circ F_{\mnd S} = F_{\mnd S} \circ \mt\rlap{ .}
  \end{equation*}
\end{Defn}

% We provide the constructions of the proof, since we will need
% them later, but omit the verifications; these can be found given
% in~\cite[\sec
% 1]{Beck1969Distributive}for the lifting part. % Here, and henceforth, we write the
% free-forgetful adjunction between $\C$ and the category of
% $\mnd{T}$-algebras as
% $U^\mnd{T} \colon \C^\mnd{T} \leftrightarrows \C \colon
% F^\mnd{T}$.% , and betw
%een $\C$ and
% the Kleisli category of $\mnd{S}$ as follows:
% \begin{equation*}
%   \cd{
%     {\C^\mnd{T}} \ar@<-4.5pt>[r]_-{U^\mnd{T}} \ar@{<-}@<4.5pt>[r]^-{F^\mnd{T}} \ar@{}[r]|-{\bot} &
%     {\C} 
%   } \qquad \qquad 
%   \cd{
%     {\C_\mnd{S}} \ar@<-4.5pt>[r]_-{U_\mnd{S}}
%     \ar@{<-}@<4.5pt>[r]^-{F_\mnd{S}} \ar@{}[r]|-{\bot} &
%     {\C}\rlap{ .}
%   }
% \end{equation*}
\begin{Prop}
  \label{prop:2}
  \cite[\sec 1]{Beck1969Distributive},
  \cite[Theorem~2.5]{Meyer1975Induced}. 
  For monads $\mnd{S}$, $\mnd{T}$ on $\C$, there are bijections
  between distributive laws of $\mnd{S}$ over
  $\mnd{T}$, liftings of $\mnd{S}$ to $\C^{\mnd T}$ and extensions of
  $\mnd T$ to $\C_{\mnd S}$.
\end{Prop}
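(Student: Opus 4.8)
The plan is to establish the three-way bijection by first constructing the passage from a distributive law $\delta \colon TS \Rightarrow ST$ to a lifting $\tilde{\mnd S}$ on $\C^{\mnd T}$, and then dualising to obtain the extension, finally checking that these constructions are mutually inverse. For the lifting, given a $\mnd T$-algebra $(X, \theta \colon TX \to X)$, I would equip $SX$ with the structure map
\begin{equation*}
  TSX \xrightarrow{\ \delta_X\ } STX \xrightarrow{\ S\theta\ } SX\rlap{ ,}
\end{equation*}
and verify using the two multiplication axioms and the $\mnd T$-unit axiom for $\delta$ that this is again a $\mnd T$-algebra. Naturality of $\delta$ ensures that $Sf$ is a $\mnd T$-algebra map whenever $f$ is, so we obtain an endofunctor $\tilde S$ on $\C^{\mnd T}$ lying strictly over $S$. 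The remaining two axioms of Definition~\ref{def:4} (those involving $\us$ and $\ms$) are precisely what is needed to verify that $\us$ and $\ms$ are themselves $\mnd T$-algebra maps at each algebra, yielding the lifted unit $\tilde\us$ and multiplication $\tilde\ms$; that $\tilde{\mnd S}$ is a monad is then inherited componentwise from $\mnd S$.

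Next I would describe the reverse passage. Given any lifting $\tilde{\mnd S}$, I would recover $\delta$ by evaluating the lifted functor on free $\mnd T$-algebras: the free algebra on $X$ is $(TX, \mt_X)$, and since $U^{\mnd T}\tilde S = S\,U^{\mnd T}$, the object $\tilde S(TX, \mt_X)$ has underlying set $STX$ and carries some $\mnd T$-algebra structure $TSTX \to STX$. Precomposing this structure map with $TS\ut_X \colon TSX \to TSTX$ and then postcomposing appropriately—or more cleanly, extracting $\delta_X$ as the $\mnd T$-algebra structure map of $\tilde S(TX,\mt_X)$ itself—gives a candidate natural transformation $TSX \to STX$. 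I would then check that these two assignments are inverse: starting from $\delta$, forming the lifting, and reading off the structure on free algebras returns $\delta$ because $S\mt_X \circ \delta_{TX} \circ \cdots$ collapses via the multiplication axiom; conversely, a lifting is determined by its values on free algebras together with the fact that every algebra is a reflexive coequaliser of free ones, and the lifted functor preserves this presentation.

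For the extension half, the cleanest route is duality: an extension of $\mnd T$ to the Kleisli category $\C_{\mnd S}$ amounts to the same data as a lifting of $\mnd T^{\mathrm{op}}$ to the category of $\mnd S^{\mathrm{op}}$-algebras, because $(\C_{\mnd S})^{\mathrm{op}} \cong (\C^{\mathrm{op}})^{\mnd S^{\mathrm{op}}}$, and a distributive law of $\mnd S$ over $\mnd T$ corresponds under this opposite to a distributive law of $\mnd T^{\mathrm{op}}$ over $\mnd S^{\mathrm{op}}$ (the four axioms being self-dual as a package). Thus the extension bijection follows formally from the lifting bijection applied to the opposite monads. Alternatively, and perhaps more transparently for the reader, I would give the extension directly: the Kleisli category $\C_{\mnd S}$ has the same objects as $\C$, with $\C_{\mnd S}(X,Y) = \C(X, SY)$, and I would define $\tilde T$ to agree with $T$ on objects and send a Kleisli arrow $f \colon X \to SY$ to the composite $TX \xrightarrow{Tf} TSY \xrightarrow{\delta_Y} STY$, using the multiplication axioms to check functoriality and the $\us$-axiom for the interaction with Kleisli composition.

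The main obstacle I expect is the reverse direction of the lifting bijection—namely, verifying that a lifting is \emph{uniquely} determined by, and genuinely recoverable from, its restriction to free $\mnd T$-algebras, and that the resulting $\delta$ satisfies all four axioms rather than merely being natural. This requires knowing that the free algebras are dense enough (every $\mnd T$-algebra is a canonical reflexive coequaliser of free ones) and that the lifted functor $\tilde S$, being a genuine endofunctor over $S$, must respect these coequalisers since $S$ does not a priori preserve them—so the argument instead leverages that $\tilde S$ commutes with the forgetful functor and that $\mnd T$-algebra structure is itself a limit-type condition reflected by $U^{\mnd T}$. Checking that the two round-trips are identities is then a diagram chase that I would organise around the defining equations of Definition~\ref{def:3}, and I would cite~\cite{Beck1969Distributive} and~\cite[Theorem~2.5]{Meyer1975Induced} for the portions that are entirely standard.
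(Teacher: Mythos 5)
Your forward constructions coincide with the paper's: the lifting $\tilde S(X,x) = (SX,\, Sx\circ\delta_X)$ with unit and multiplication inherited componentwise, and the direct description of the extension sending a Kleisli arrow $f \colon X \to SY$ to $\delta_Y\circ Tf$, are exactly the formulas used there. Two points need repair, however. First, your duality route for the extension half rests on the claim $(\C_{\mnd S})^{\mathrm{op}} \cong (\C^{\mathrm{op}})^{\mnd{S}^{\mathrm{op}}}$, which is false: a monad on $\C$ becomes a \emph{comonad} on $\C^{\mathrm{op}}$, and the opposite of the Kleisli category is the coKleisli category of that comonad, not an Eilenberg--Moore category of a monad. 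Your fallback direct construction is the correct one and is what the paper does; the paper moreover recovers $\delta$ from an extension simply by applying $\tilde T$ to $1_{SX}$ viewed as a Kleisli map from $SX$ to $X$, a step your proposal omits. Second, your first recipe for extracting $\delta_X$ from a lifting (precompose $\sigma_{F^{\mnd T}X}$ with $TS\ut_X$) is precisely the paper's formula, but the alternative you call ``cleaner''---taking $\delta_X$ to be the structure map of $\tilde S(TX,\mt_X)$ itself---does not typecheck, since that map has domain $TSTX$ rather than $TSX$. Relatedly, your worry about density and reflexive coequalisers for the round trip is misplaced: no colimit argument is needed. For any algebra $(X,x)$ the structure map $x \colon (TX,\mt_X) \to (X,x)$ is itself a $\mnd T$-algebra morphism, so $Sx$ underlies a morphism $\tilde S(TX,\mt_X) \to \tilde S(X,x)$, giving $\sigma_{X,x}\circ TSx = Sx\circ\sigma_{TX,\mt_X}$; precomposing with $TS\ut_X$ and using $x\circ\ut_X = 1_X$ yields $\sigma_{X,x} = Sx\circ\delta_X$ directly, which is the whole of the reverse verification.
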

\begin{proof}
  Given a distributive law $\delta \colon TS \Rightarrow ST$, we
  define the corresponding lifting of $\mnd{S}$ to
  $\mnd{T}$-algebras to have action on objects given by
  \begin{equation*}
    \tilde S(X,\, TX \xrightarrow x X) \qquad = \qquad (SX,\, TSX
    \xrightarrow{\delta_X} STX \xrightarrow{Sx} SX)\rlap{ .}
  \end{equation*}
  and remaining data inherited
  from $\mnd S$: thus $\tilde S(f) = Sf$, ${\tilde \us}_X = \us_X$ and ${\tilde
    \ms}_X = \ms_X$. 
  %, and the necessary axioms follow by diagram-chasing.
  % 
  % 
  % We now check that, for each $(X,x) \in \C^\mnd{T}$:
  % \begin{enumerate}[(i)]
  % \item The pair $(SX, Sx.\delta_X)$ is indeed a $\mnd{T}$-algebra;
  % \item The map $\ms _X$ lifts to a map
  %   $(SSX,\sigma_{SX, \sigma_{X,x}}) \rightarrow (SX, \sigma_{X,x})$
  %   in $\C^\mnd{T}$;
  % \item The map $\us_X$ lifts to a map
  %   $(X,x) \rightarrow (SX, \sigma_{X,x})$ in $\C^\mnd{T}$;
  % \end{enumerate}
  % where in (i)--(iii) we use, respectively, commutativity
  % in~\eqref{eq:6}, commutativity in the left square of~\eqref{eq:3},
  % and commutativity in the right square of~\eqref{eq:3}.
  Conversely, for a lifting of $\mnd{S}$ to $\mnd{T}$-algebras
  with action $\tilde S(X,x) = (SX, \sigma_{X,x})$, the
  corresponding distributive law $\delta \colon TS \Rightarrow ST$ is
  given by:
  \begin{equation}\label{eq:13}
    \delta_{X} = TSX \xrightarrow{TS\ut_{X}} TSTX
    \xrightarrow{\sigma_{F^\mnd{T}\!X}} STX\rlap{ .} 
  \end{equation}

  Next, 
  for a distributive law $\delta \colon TS \Rightarrow ST$, the
  corresponding extension of $\mnd{T}$ to $\C_\mnd{S}$ is given on
  objects by
  $\tilde TX = TX$ and on a Kleisli map from $X$ to $Y$~by
  \begin{equation*}
    \tilde T(X \xrightarrow{f} SY) = TX \xrightarrow{Tf} TSY
    \xrightarrow{\delta_Y} STY\rlap{, }
  \end{equation*}
  while the unit and multiplication have components
  \begin{equation*}
    X \xrightarrow{\ut_X} TX \xrightarrow{\us_{TX}} STX \qquad
    \text{and} \qquad
    TTX \xrightarrow{\mt_X} TX \xrightarrow{\us_{TX}} STX\rlap{ .}
  \end{equation*}
  Conversely, given an extension $\tilde{\mnd{T}}$ of $\mnd{T}$, we
  may view each map $1_{SX} \colon SX \rightarrow SX$ as a Kleisli map
  from $SX$ to $X$, and applying $\tilde T$ yields a Kleisli map from
  $TSX$ to $TX$, which provides the $X$-component of the corresponding
  distributive law:
  \begin{equation*}
    \tilde T(SX \xrightarrow{1_{SX}} SX) = TSX \xrightarrow{\delta_X} STX\rlap{ .}\qedhere
  \end{equation*}
  % To see that these assignations are well-defined
  % and inverse, see~\cite[\sec 1]{Beck1969Distributive} and
  % \cite[Theorem~2.5]{Meyer1975Induced}.
\end{proof}
We can describe the algebras for the lifted monad $\tilde{\mnd S}$
associated to a distributive law in various other ways. One is in
terms of the \emph{composite monad} $\mnd{ST}$ on $\C$,
which is the monad induced by the composite adjunction
$\smash{(\C^\mnd{T})^{\tilde{\mnd S}} \leftrightarrows \C^\mnd{T}
  \leftrightarrows \C}$; its underlying endofunctor is $ST$, its unit
is $\us \ut \colon 1 \Rightarrow ST$ and its multiplication is
$\ms \mt \circ S \delta T \colon STST \Rightarrow ST$. Another is in
terms of ``$\delta$-algebras'':
% Before continuing, let us note the following useful alternative
% characterisation of the algebras for the weakly lifted monad
% $\tilde{\mnd S}$ induced by a weak distributive law. 
\begin{Defn}
  \label{def:6}
  Let $\delta \colon TS \Rightarrow ST$ be a distributive law of
  $\mnd{S}$ over $\mnd{T}$. A \emph{$\delta$-algebra} is an
  object $X \in \C$ endowed with $\mnd{T}$-algebra structure
  $t \colon TX \rightarrow X$ and $\mnd{S}$-algebra structure
  $s \colon SX \rightarrow X$ and rendering commutative the diagram
  below. The category $\C^\delta$ of $\delta$-algebras is the full subcategory
of $\C^{\mnd S} \times_\C \C^{\mnd T}$ on the $\delta$-algebras.
\begin{equation}\label{eq:21}
  \cd[@-0.5em]{
    TSX \ar[r]^-{\delta_X} \ar[d]^-{Ts}& STX \ar[r]^-{St} &
    SX \ar[d]^-{s} \\
    TX \ar[rr]^-{t} & & X
  }
\end{equation}
\end{Defn}
The basic result relating these notions is the following; for the
proof, see~\cite{Beck1969Distributive}.

\begin{Lemma}
  \label{lem:6}
  For any distributive law $\delta \colon TS \Rightarrow ST$ of
  $\mnd{S}$ over $\mnd{T}$, there are canonical isomorphisms between % There are various ways of characterising the algebras for the weakly
% lifted monad $\tilde{\mnd S}$ associated to a weak distributive law
% $\delta \colon TS \Rightarrow ST$ of $\mnd S$ over $\mnd T$. Indeed,
% by~\cite[Proposition~3.7]{Bohm2010The-weak}, 
the category of $\tilde {\mnd S}$-algebras in $\C^\mnd{T}$,
the category of $\mnd{ST}$-algebras in $\C$, and
the category of $\delta$-algebras in $\C$.
\end{Lemma}

% The action of this monad
% $\widetilde{\mnd{ST}}$ on an object $X \in \C$ is obtained by
% splitting the idempotent
% \begin{equation*}
%   STX \xrightarrow{\ut_{STX}} TSTX \xrightarrow{\delta_{TX}}STTX
%   \xrightarrow{S\mt_X} STX\rlap{ .}
% \end{equation*}
% In the case of the weak distributive law of $\ps$ over $\uf$, this
% composite monad is the familiar \emph{filter monad} $\flt$. So we have
% derived the filter monad canonically from the ultrafilter monad.

\subsection{Weak distributive laws}
\label{sec:weak-distr-laws}
As explained in the introduction, weak distributive laws generalise
distributive laws by relaxing the axioms relating to the monad units.
There are various ways of doing this, studied in
Street~\cite{Street2009Weak} and B\"ohm~\cite{Bohm2010The-weak}, but
we will need only one, which we henceforth refer to with the unadorned name
``weak distributive law''. In the terminology
of~\cite{Bohm2010The-weak}, our notion is that of a monad in
$\cat{E\M }^w(\cat{Cat})$ whose $2$-cell data satisfy the conditions
of Lemma~1.2(3) of \emph{ibid}.
\begin{Defn}
  \label{def:4}
  Let $\mnd{S} = (S, \us, \ms)$ and $\mnd{T} = (T, \ut, \mt)$ be
  monads on a category $\C$. A \emph{weak distributive law of
    $\mnd{S}$ over $\mnd{T}$} is a natural transformation
  $\delta \colon TS \Rightarrow ST$ rendering commutative the three
  diagrams:
  \begin{equation*}
    \cd[@-0.5em@C-0.5em]{
      TSS \ar[r]^-{\delta S} \ar[d]_-{T\ms} &
      STS \ar[r]^-{S\delta} & SST \ar[d]^-{\ms T} & 
      TTS \ar[r]^-{T\delta} \ar[d]_-{\mt S} & 
      TST \ar[r]^-{\delta T} & STT \ar[d]^-{S\mt} & 
      & {T} \ar[dl]_-{T\us} \ar[dr]^-{\us T}\\
      TS \ar[rr]^-{\delta} & & ST & 
      {TS} \ar[rr]^-{\delta} & & {ST} & 
      {TS} \ar[rr]^-{\delta} & & 
      {ST}\rlap{ .}
    }
  \end{equation*}
\end{Defn}
Thus, a weak distributive law in our sense simply drops from Beck's
definition the axiom relating to the unit of $\mnd{T}$. Such weak
distributive laws correspond to \emph{weak} liftings and to
\emph{weak} extensions, where the definitions of these are a bit
more subtle. 

\begin{Defn}
  \label{def:9}
  Let $\mnd{S} = (S, \us, \ms)$ and $\mnd{T} = (T, \ut, \mt)$ be
  monads on a category $\C$. A \emph{weak lifting} of $\mnd{S}$ to
  $\C^\mnd{T}$ comprises a monad $\tilde{\mnd S}$ on $\C^\mnd{T}$ and
  natural transformations
  \begin{equation}\label{eq:10}
    U^\mnd{T} \tilde S \xRightarrow{\ \ \iota\ \ } 
    SU^\mnd{T} \xRightarrow{\ \ \pi\ \ } U^\mnd{T} \tilde S
  \end{equation}
  such that $\pi\iota = 1$, and such that each of the following diagrams
  commutes:
  \begin{equation}
    \cd[@-0.7em]{
      U^{\mnd T}\tilde S\tilde S \ar[r]^-{\iota \tilde S}
      \ar[d]_-{U^{\mnd T}\tilde \ms} &
      SU^{\mnd T}\tilde S \ar[r]^-{S\iota} &
      SSU^{\mnd T} \ar[d]^-{\ms U^{\mnd T}} & &
      & {U^\mnd{T}} \ar[dl]_-{U^\mnd{T} \tilde \us}
      \ar[dr]^-{\us U^{\mnd T}} \\
      U^{\mnd T}\tilde S \ar[rr]^-{\iota} & & SU^{\mnd T} & &
      {U^{\mnd T}\tilde S} \ar[rr]^-{\iota} & &
      {SU^{\mnd T}}
    }\label{eq:12}
  \end{equation}
    % \cd{
    % & {U^\mnd{T}} \ar[dl]_-{\usU^\mnd{T}}
    % \ar[d]^-{\eta^{\tilde{\mnd{S}}}} \ar[dr]^-{\usU^\mnd{T}} \\
    % {SU^\mnd{T}} \ar[r]_-{\iota} & U^\mnd{T}\tilde S&
    % {SU^\mnd{T}}
    % }
  \begin{equation}
  \cd[@-0.7em]{
      SSU^{\mnd T} \ar[d]_-{\ms U^{\mnd T}} \ar[r]^-{S\pi}&
      SU^{\mnd T}\tilde S \ar[r]^-{\pi \tilde S} &
      U^{\mnd T}\tilde S\tilde S \ar[d]^-{U^\mnd{T}\tilde \ms} & &
      & {U^\mnd{T}} \ar[dr]^-{U^\mnd{T} \tilde \us}
      \ar[dl]_-{\us U^{\mnd T}} \\
      SU^{\mnd T} \ar[rr]^-{\pi} & & U^{\mnd T}\tilde S & &
      {SU^{\mnd T}} \ar[rr]^-{\pi} & &
      {U^{\mnd T}\tilde S}\rlap{ ;}
    }\label{eq:11}
  \end{equation}
  while a \emph{weak extension} of $\mnd{T}$ to
  $\C_\mnd{S}$ comprises a functor
  $\tilde T \colon \C_\mnd{S} \rightarrow \C_\mnd{S}$ and natural
  transformation $\tilde \mt \colon \tilde T\tilde T \Rightarrow \tilde T$
  such that $\tilde T \circ F_{\mnd S} = F_{\mnd S} \circ T$ and
  $\tilde \mt \circ F_{\mnd S} = F_{\mnd S} \circ \mt$.
\end{Defn}

Note that our ``weak liftings'' are exactly the simultaneous weak
$\iota$- and $\pi$-liftings of~\cite{Bohm2010The-weak}. By exactly the
same constructions as in Proposition~\ref{prop:2}, we have:
\begin{Prop}\label{prop:5}
  For monads $\mnd{S}$, $\mnd{T}$ on $\C$, there is a bijective
  correspondence between weak distributive laws of $\mnd{S}$ over
  $\mnd{T}$ and weak extensions of $\mnd T$ to $\C_{\mnd S}$.
\end{Prop}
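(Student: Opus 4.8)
The plan is to reuse verbatim the Kleisli-extension half of the construction in Proposition~\ref{prop:2}, and simply to check that once the unit data and the $\mnd T$-unit axiom are deleted from both sides, the remaining pieces still correspond bijectively. No new idea beyond Proposition~\ref{prop:2} is required; the whole content is a careful accounting of which axiom governs which piece of structure.

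In the forward direction, starting from a weak distributive law $\delta \colon TS \Rightarrow ST$, I would define $\tilde T$ on objects by $\tilde T X = TX$ and on a Kleisli map $f \colon X \to SY$ by $\tilde T(f) = TX \xrightarrow{Tf} TSY \xrightarrow{\delta_Y} STY$, exactly as before, and take $\tilde\mt$ to have components $\tilde\mt_X = F_{\mnd S}(\mt_X)$, that is $TTX \xrightarrow{\mt_X} TX \xrightarrow{\us_{TX}} STX$. First I would verify that $\tilde T$ is a functor on $\C_{\mnd S}$: preservation of Kleisli identities is precisely the axiom of $\delta$ relating to $\us$, while preservation of Kleisli composition follows from naturality of $\delta$ together with the axiom relating to $\ms$. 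The identity $\tilde T \circ F_{\mnd S} = F_{\mnd S} \circ T$ then drops out of the same $\us$-axiom. Next I would check that $\tilde\mt \colon \tilde T\tilde T \Rightarrow \tilde T$ is natural with respect to Kleisli maps; unwinding both composites and simplifying using the two unit laws of $\mnd S$ and the naturality of $\us$, this reduces exactly to the axiom of $\delta$ relating to $\mt$. The compatibility $\tilde\mt \circ F_{\mnd S} = F_{\mnd S} \circ \mt$ holds by construction. (One may note in passing that associativity of $\tilde\mt$ also follows, from associativity of $\mnd T$ and a unit law of $\mnd S$, although it is not demanded by the definition of weak extension.) Conversely, from a weak extension $(\tilde T, \tilde\mt)$ I would recover $\delta$ as in Proposition~\ref{prop:2}, by regarding $1_{SX}$ as a Kleisli map from $SX$ to $X$ and setting $\delta_X = \tilde T(1_{SX}) \colon TSX \to STX$; naturality of $\delta$ and the three axioms are then read off from functoriality of $\tilde T$ and naturality of $\tilde\mt$, reversing the computations above, and the two assignments are seen to be mutually inverse by the same bookkeeping as in the strong case.

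The main point to make precise — and the only real subtlety — is that the strong correspondence of Proposition~\ref{prop:2} pairs the $\ms$- and $\us$-axioms with functoriality of $\tilde T$, the $\mt$-axiom with naturality of $\tilde\mt$, and the (now discarded) $\ut$-axiom with naturality of the unit $\tilde\ut$. Since a weak distributive law retains exactly the first three axioms, while a weak extension retains exactly a functor $\tilde T$ and a natural $\tilde\mt$ but carries no unit, the two halves of the strong bijection restrict cleanly against one another. The work is therefore to confirm that none of the three retained constructions, nor any of their verifications, ever invokes the discarded $\ut$-axiom or mentions the unit $\tilde\ut$; granting this, the bijection of Proposition~\ref{prop:2} cuts down to the asserted one.
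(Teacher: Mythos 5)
Your proposal is correct and matches the paper exactly: the paper's entire ``proof'' of Proposition~\ref{prop:5} is the single remark that it follows ``by exactly the same constructions as in Proposition~\ref{prop:2}'', and your accounting of which axiom of $\delta$ pairs with which piece of the extension (the $\us$- and $\ms$-axioms with functoriality of $\tilde T$, the $\mt$-axiom with naturality of $\tilde\mt$, and the discarded $\ut$-axiom with the discarded unit $\tilde\ut$) is precisely the bookkeeping the paper leaves implicit.
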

The correspondence between weak distributive laws and weak liftings is
more interesting. It is proved by Proposition~4.4 and Theorem~4.5
of~\cite{Bohm2010The-weak} in a more general context; however, for the
particular kind of weakness we are interested in, the following more direct
proof is possible.

% \begin{Prop}
%   \label{prop:6}
%   Let $\C$ be a category in which all idempotents split. For monads
%   $\mnd{S}$ and $\mnd{T}$ on $\C$, there is a bijective
%   correspondence between weak distributive laws of $\mnd{S}$ over
%   $\mnd{T}$, and weak liftings of $\mnd{S}$ to $\C^\mnd{T}$.
% \end{Prop}
% 
% % Given our identification of our weak liftings as the simultaneous weak
% % $\iota$- and $\pi$-liftings, t

To begin with, we define a \emph{semialgebra} for a monad
$\mnd T = (T, \ut, \mt)$ to be a pair
$(X \in \C, x \colon TX \rightarrow X)$ satisfying the associativity
axiom $x.Tx = x.\mt_X$ but not necessarily the unit axiom
$x.\ut_X = 1_X$. The $\mnd{T}$-semialgebras form a category
$\C^\mnd{T}_s$, wherein a map from $(X,x)$ to $(Y,y)$ is a map
$f \colon X \rightarrow Y$ with $y.Tf = f.x$.

\begin{Lemma}
  \label{lem:4}
  If idempotents split in $\C$, then the full inclusion
  $I \colon \C^\mnd{T} \rightarrow \C^\mnd{T}_s$ has a simultaneous left
  and right adjoint
  $K \colon \C^\mnd{T}_s \rightarrow \C^\mnd{T}$.
\end{Lemma}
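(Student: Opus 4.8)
The plan is to split the idempotent that measures the failure of the unit axiom, and to show that this splitting automatically lands among the genuine algebras. The whole construction is forced by one observation, so I will organise the proof around it.

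First I would establish the key fact: for any $\mnd T$-semialgebra $(X,x)$, the endomorphism $e_X \defeq x \cdot \ut_X \colon X \to X$ is idempotent. Indeed, naturality of $\ut$ gives $\ut_X \cdot x = Tx \cdot \ut_{TX}$, so that $e_X \cdot e_X = x \cdot Tx \cdot \ut_{TX} \cdot \ut_X = x \cdot \mt_X \cdot \ut_{TX} \cdot \ut_X = x \cdot \ut_X = e_X$, using the semialgebra law $x \cdot Tx = x \cdot \mt_X$ and the monad unit law $\mt_X \cdot \ut_{TX} = 1_{TX}$. (One checks similarly that $e_X$ is even an endomorphism of $(X,x)$ in $\C^\mnd{T}_s$, natural in $(X,x)$; this is the conceptual reason the construction below is functorial, though I will not strictly need it.) Since idempotents split in $\C$, I fix a splitting $X \xrightarrow{r_X} \bar X \xrightarrow{i_X} X$ with $r_X \cdot i_X = 1_{\bar X}$ and $i_X \cdot r_X = e_X$, observing that when $(X,x)$ is a genuine algebra $e_X = 1_X$, so that we may take the trivial splitting there.

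Next I would equip $\bar X$ with the structure map $\bar x \defeq r_X \cdot x \cdot T i_X \colon T\bar X \to \bar X$ and verify that $(\bar X, \bar x)$ is a \emph{genuine} $\mnd T$-algebra. The unit law $\bar x \cdot \ut_{\bar X} = 1_{\bar X}$ follows from naturality of $\ut$ together with $i_X \cdot r_X = e_X$ and $r_X \cdot i_X = 1$; the associativity $\bar x \cdot T\bar x = \bar x \cdot \mt_{\bar X}$ is the one slightly longer computation, in which one rewrites $Ti_X \cdot Tr_X = Te_X = Tx \cdot T\ut_X$ and then applies the semialgebra law $x \cdot Tx = x \cdot \mt_X$ twice, the monad unit law $\mt_X \cdot T\ut_X = 1_{TX}$, and naturality of $\mt$. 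Setting $K(X,x) \defeq (\bar X, \bar x)$ defines $K$ on objects, with $KI = 1_{\C^\mnd{T}}$ for the trivial splittings above; and a direct check shows that $r_X$ and $i_X$ are maps of semialgebras $(X,x) \to IK(X,x)$ and $IK(X,x) \to (X,x)$ respectively.

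I would then establish both adjunctions simultaneously through (co)universal arrows, which has the advantage of fixing $K$ on morphisms for free. For $K \dashv I$, I claim $r_X \colon (X,x) \to IK(X,x)$ is universal: given an algebra $(A,a)$ and a semialgebra map $g \colon (X,x) \to (A,a)$, the map $\bar g \defeq g \cdot i_X$ is a $\mnd T$-algebra homomorphism with $\bar g \cdot r_X = g$, the latter because $g \cdot e_X = g \cdot x \cdot \ut_X = a \cdot Tg \cdot \ut_X = a \cdot \ut_A \cdot g = g$ (semialgebra map, naturality of $\ut$, unit law of $(A,a)$), and uniqueness is immediate from $r_X \cdot i_X = 1$. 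Dually, for $I \dashv K$ I claim $i_X \colon IK(X,x) \to (X,x)$ is couniversal: any semialgebra map $g \colon (A,a) \to (X,x)$ factors uniquely as $i_X \cdot \tilde g$ with $\tilde g \defeq r_X \cdot g$ an algebra map, the factorisation resting on the symmetric identity $e_X \cdot g = g$. Together these give $K$ as a simultaneous left and right adjoint of $I$, with unit $r$ and counit $i$.

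I expect no genuine obstacle: once one spots that $x \cdot \ut_X$ is idempotent, everything is forced, and the remaining content is routine diagram-chasing with the monad axioms (both unit laws appearing across the verifications: $\mt \cdot \ut T = 1$ in the idempotence of $e_X$, and $\mt \cdot T\ut = 1$ in the associativity of $\bar x$). The only point demanding care is the dependence of $K$ on the chosen splittings; but this is harmless, since the universal properties above pin down $K$ on morphisms and render functoriality, as well as naturality of $r$ and $i$, automatic. The most error-prone single computation is the associativity of $\bar x$, where the unit law $\mt \cdot T\ut = 1$ and two applications of the semialgebra law are required.
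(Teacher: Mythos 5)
Your proof is correct and follows essentially the same route as the paper: both hinge on observing that $x\cdot\ut_X$ is idempotent, splitting it, checking the retract is a genuine algebra, and using the two halves of the splitting to give the universal and couniversal factorisations for the two adjunctions. The only cosmetic difference is that the paper splits the idempotent directly in $\C^\mnd{T}_s$ (as a morphism of semialgebras), whereas you split in $\C$ and transfer the structure map to the retract by hand --- a slightly more explicit but equivalent bookkeeping.
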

\begin{proof}
  For any $(X,x) \in \C^\mnd{T}_s$ we have
  $ x . \ut_X . x = x . Tx . \ut_{TX} = x .
  \mt_X . \ut_{TX} = x = x . \mt_X .
  T\ut_X = x . Tx . T\ut_X$ so that
  $x . \ut_X \colon (X,x) \rightarrow (X,x)$ is an
  idempotent of $\mnd{T}$-semialgebras. Splitting this idempotent
  yields a diagram
  \begin{equation}\label{eq:16}
    \cd{
      (X,x) \ar@{->>}[r]^-{p} & (Y,y)\ar@{ >->}[r]^-{i} & (X,x)
    }
  \end{equation}
  in $\C^\mnd{T}_s$ with $pi = 1_Y$ and
  $ip = x . \ut_X$. The semialgebra $(Y,y)$ is in fact a
  $\mnd{T}$-algebra since
  $y \ut_Y = piy\ut_Y = px . Ti .
  \ut_Y = px.\ut_X i = pipi = 1_Y$. Moreover,
  if $(Z,z)$ is a $\mnd{T}$-algebra and
  $f \colon (X,x) \rightarrow (Z,z)$, then
  $f = z\ut_Z. f = z.Tf.\ut_X
  =f.x.\ut_X = fip$ so that $f$ factors through $p$.
  On the other hand, if
  $g \colon (Z,z) \rightarrow (X,x)$, then
  $g = gz\ut_Z = x.Tg.\ut_Z =
  x.\ut_X.g = ipg$ so that $g$ factors through $i$. Thus
  $i$ and $p$ exhibit $(Y,y)$ as the value at $(X,x)$ of the desired
  left and right adjoint $K$.
\end{proof}

% With this lemma in place, we are ready for:
\begin{Prop}\label{prop:10}
  If idempotents split in $\C$, then for any monads $\mnd{S}$, $\mnd{T}$
  on $\C$, there is a bijective correspondence between weak
  distributive laws of $\mnd{S}$ over $\mnd{T}$ and weak liftings of
  $\mnd S$ to $\C^{\mnd T}$.
\end{Prop}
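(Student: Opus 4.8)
The plan is to factor the correspondence through the intermediate category $\C^\mnd{T}_s$ of $\mnd T$-semialgebras, exploiting the fact that dropping the $\mnd T$-unit axiom from Beck's definition corresponds exactly to dropping the $\mnd T$-unit axiom from the notion of algebra. Concretely, given a weak distributive law $\delta \colon TS \Rightarrow ST$, I would first define an endofunctor $\hat S$ on $\C^\mnd{T}_s$ by $\hat S(X,x) = (SX,\, Sx \circ \delta_X)$ and $\hat S(f) = Sf$. The two multiplication axioms of $\delta$, together with naturality of $\delta$ and associativity of $x$, make $Sx \circ \delta_X$ into an associative $\mnd T$-semialgebra structure; crucially this computation never uses the unit axiom of $x$, so $\hat S$ is defined on all of $\C^\mnd{T}_s$, not merely on $\C^\mnd{T}$. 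The retained unit axiom $\delta \circ T\us = \us T$ shows each $\us_X$ is a semialgebra map $(X,x) \to \hat S(X,x)$, while the remaining multiplication axiom shows each $\ms_X$ is a semialgebra map; hence $\hat{\mnd S} = (\hat S, \us, \ms)$ is a genuine monad on $\C^\mnd{T}_s$ that \emph{strictly} lifts $\mnd S$ along the forgetful functor $\C^\mnd{T}_s \to \C$. By the constructions of Proposition~\ref{prop:2}, read off from the free $\mnd T$-algebra $(TX, \mt_X)$ via $\delta_X = (S\mt_X \circ \delta_{TX}) \circ TS\ut_X$, this assignment $\delta \mapsto \hat{\mnd S}$ is a bijection between weak distributive laws and strict liftings of $\mnd S$ to $\C^\mnd{T}_s$ --- the point being that the argument of Proposition~\ref{prop:2} nowhere invokes the $\mnd T$-unit axiom.

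The second step is to transport this strict lifting across the retraction of Lemma~\ref{lem:4}. Writing $I \colon \C^\mnd{T} \hookrightarrow \C^\mnd{T}_s$ and $K$ for its simultaneous left and right adjoint, I would set $\tilde S = K \hat S I$. For an algebra $(X,x)$ the idempotent splitting of $\hat S I(X,x) = (SX, \sigma)$, where $\sigma = Sx\circ\delta_X$, along $e = \sigma\circ\ut_{SX}$ produces maps $p \colon (SX,\sigma) \twoheadrightarrow \tilde S(X,x)$ and $i \colon \tilde S(X,x) \rightarrowtail (SX,\sigma)$ with $pi = 1$ and $ip = e$; I would take $\iota = U^\mnd{T} i$ and $\pi = U^\mnd{T} p$, so that $\pi\iota = 1$ as required, with naturality coming from functoriality of $K$. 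The monad structure on $\tilde S$ is then transported along the idempotent adjunction: the unit is $1 \cong KI \Rightarrow K\hat S I = \tilde S$ induced by the unit of $\hat{\mnd S}$ (using that $KI \cong 1$ since $I$ is fully faithful), while the multiplication is obtained from $\tilde S\tilde S = K\hat S (IK) \hat S I$ by using the counit $\epsilon \colon IK \Rightarrow 1$ of the adjunction $I \dashv K$ to collapse the central $IK$ and then applying the multiplication of $\hat{\mnd S}$.

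Finally I would verify the weak-lifting axioms of Definition~\ref{def:9} and the bijectivity. The unit triangles reduce to the identity $e \circ \us_X = \us_X$: expanding $e$ and using the $T\us$ axiom one finds $e\circ\us_X = \us_X \circ x \circ \ut_X$, which equals $\us_X$ precisely because $(X,x)$ is now an honest algebra --- this is where the $\mnd T$-unit axiom, absent on semialgebras, re-enters. The multiplication squares of~\eqref{eq:12} and~\eqref{eq:11} follow from $\hat{\mnd S}$ being a \emph{strict} monad together with the splitting relations $pi = 1$, $ip = e$ and the triangle identities. For the inverse direction, a weak lifting $(\tilde{\mnd S}, \iota, \pi)$ is sent to $\delta_X = \iota_{(TX,\mt_X)} \circ a \circ T\pi_{(TX,\mt_X)} \circ TS\ut_X$, where $a$ is the $\mnd T$-algebra structure of $\tilde S(TX,\mt_X)$, and one checks as in Proposition~\ref{prop:2} that the two passages are mutually inverse. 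I expect the main obstacle to be precisely the multiplication compatibility diagrams: keeping track of where the idempotent $e$ must be inserted or absorbed, and confirming that the transported multiplication simultaneously satisfies the monad associativity for $\tilde{\mnd S}$ and the two squares relating it to $\ms$ through $\iota$ and $\pi$ --- this is the step that genuinely consumes both multiplication axioms of $\delta$ and the coherence of the idempotent adjunction.
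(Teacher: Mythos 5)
Your proposal is correct and follows essentially the same route as the paper: both pass through the category $\C^\mnd{T}_s$ of semialgebras, observe that $\delta$ induces a \emph{strict} lifting $(X,x)\mapsto(SX,Sx\circ\delta_X)$ there, obtain $\tilde{\mnd S}$ by splitting the idempotent $Sx\circ\delta_X\circ\ut_{SX}$ via the simultaneous left and right adjoint $K$ of Lemma~\ref{lem:4} (the paper phrases this as the monad of the composite adjunction, which is the same transport you describe), and recover $\delta$ from a weak lifting by the formula~\eqref{eq:13} applied to $\iota\circ(\text{structure map})\circ T\pi$. The only caveat is your intermediate claim that $\delta\mapsto\hat{\mnd S}$ is a \emph{bijection} onto strict liftings to semialgebras: injectivity is clear, but surjectivity is doubtful since a semialgebra structure $\hat\sigma_{X,x}$ need not be recoverable from its value on free algebras without the unit law $x\ut_X=1$; happily this claim is not load-bearing, as your final bijection is verified directly between weak distributive laws and weak liftings, exactly as in the paper.
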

\begin{proof}
  Given a weak distributive law $\delta \colon TS \Rightarrow ST$, we
  may define a \emph{strict} lifting $\check{\mnd S}$ of
  $\mnd{S}$ to $\mnd{T}$-\emph{semi}algebras by taking, as in
  Proposition~\ref{prop:2}, $\check S(X,x) = (SX, Sx.\delta_X)$
  % \begin{equation*}
  %   \check S(X,\, TX \xrightarrow x X) \qquad = \qquad (SX,\, TSX
  %   \xrightarrow{\delta_X} STX \xrightarrow{Sx} SX)\rlap{ ,}
  % \end{equation*}
  and with the remaining data inherited from $\mnd S$.
  We now obtain the desired \emph{weak} lifting $\tilde{\mnd S}$ of
  $\mnd{S}$ to $\C^\mnd{T}$ as the monad generated by the
  composite adjunction:
  \begin{equation*}
    \cd{
      {(\C^\mnd{T}_s)^{\check{\mnd S}}}
      \ar@<-4pt>[r]_-{U^{\check{\mnd S}}}
      \ar@{<-}@<4pt>[r]^-{F^{\check{\mnd S}}} \ar@{}[r]|-{\bot} &
      {{\C^\mnd{T}_s} } \ar@<-4pt>[r]_-{I} \ar@{<-}@<4pt>[r]^-{K} \ar@{}[r]|-{\bot} &
      {\C^\mnd{T}}\rlap{ .}
    }
  \end{equation*}
  In particular, $\tilde S$ sends a $\mnd{T}$-algebra $(X,x)$ to
  the $\mnd{T}$-algebra obtained as the splitting
  \begin{equation}\label{eq:14}
    \cd[@C+1em]{
      (SX, Sx.\delta_X) \ar@{->>}[r]^-{\pi_{X,x}} &
      \tilde S(X,x) \ar@{ >->}[r]^-{\iota_{X,x}} &
      (SX, Sx.\delta_X)}
  \end{equation}
  of the idempotent
  $Sx.\delta_X.\ut_{SX} \colon (SX, Sx.\delta_X) \rightarrow (SX,
  Sx.\delta_X)$ in the category of $\mnd{T}$-semialgebras. Applying
  the forgetful functor $\C^\mnd{T}_s \rightarrow \C$ to~\eqref{eq:14}
  yields the components of the $\iota$
  and $\pi$ required in~\eqref{eq:10}, and it is  clear from the manner of
  definition that the lifted unit $\tilde \us$ is the \emph{unique}
  map rendering the triangles in~\eqref{eq:12} and~\eqref{eq:11}
  commutative. As for the lifted multiplication $\tilde \ms$, a short
  calculation shows that, 
  % we first
  % claim that, 
  for any $\mnd{T}$-semialgebra $(X,x)$ with
  $\mnd{T}$-algebra splitting~\eqref{eq:16}, the maps
  \begin{equation*}
    \cd[@C-0.5em]{
      (SX,Sx.\delta_X) \ar@{->>}[r]^-{Sp} &
      (SY, Sy.\delta_Y) \ar@{->>}[r]^-{\pi_{Y,y}} &
      \tilde S(Y,y) \ar@{ >->}[r]^-{\iota_{Y,y}} &
      (SY, Sy.\delta_Y) \ar@{ >->}[r]^-{Si} &
      (SX, Sx.\delta_X)
    }
  \end{equation*}
  compose to the idempotent $Sx.\delta_X.\ut_{SX}$, and so
  exhibit $\tilde S(Y,y)$ as the $\mnd{T}$-algebra splitting of
  $\check{S}(X,x) = (SX, Sx.\delta_X)$. Thus, 
  % exhibit $\tilde S(Y,y)$ as the $\mnd{T}$-algebra splitting of
  % $\check{S}(X,x) = (SX, Sx.\delta_X)$. For this it suffices to show
  % that the displayed composite is the idempotent
  % $Sx.\delta_X.\ut_{SX}$; for which we calculate that
  % $Si.\iota_{Y,y}.\pi_{Y,y}.Sp = Si.Sy.\delta_Y.\ut_{SY}.Sp =
  % Si.Sy.\delta_Y.TSp.\ut_{SX} = Si.Sp.Sx.\delta_X.\ut_{SX} =
  % Sx.S\ut_X.Sx.\delta_X.\ut_{SX} =Sx.\delta_X. \eta_{SX}$ as required
  % (using $x.\ut.x = x$ at the last stage).  Thus,
  for any
  $\mnd{T}$-algebra $(X,x)$, the maps
  \begin{equation*}
    \cd[@C+0.1em]{
       \check S \check S(X,x) \ar@{->>}[r]^-{\check S\pi_{X,x}} &
       \check S \tilde S(X,x) \ar@{->>}[r]^-{\pi_{\tilde S(X,x)}} &
      \tilde S\tilde S(X,x) \ar@{ >->}[r]^-{\iota_{\tilde S(X,x)}} &
       \check S\tilde S(X,x) \ar@{->>}[r]^-{\check S\iota_{X,x}} &
       \check S \check S(X,x)
    }
  \end{equation*}
  exhibit $\tilde S \tilde S(X,x)$ as the $\mnd{T}$-algebra
  splitting of $\check {S} \check {S}(X,x)$; whence 
  $\tilde \ms$ is the \emph{unique} map rendering
  commutative the rectangles in~\eqref{eq:12} and~\eqref{eq:11}, as required.

  This concludes the construction of a weak lifting from a weak
  distributive law. Suppose conversely we have a weak
  lifting of $\mnd{S}$ to $\mnd{T}$-algebras. For each
  $\mnd{T}$-algebra $(X,x)$ with $\tilde S(X,x) = (Y,y)$, 
  define the map $\sigma_{X,x} \colon TSX \rightarrow SX$ as the
  composite
  \begin{equation*}
    TSX \xrightarrow{T\pi_{X,x}} TY \xrightarrow{y} Y
    \xrightarrow{\iota_{X,x}} SX
  \end{equation*}
  and now define $\delta \colon TS \Rightarrow ST$ to have
  components~\eqref{eq:13}. Direct calculation shows this to be a weak
  distributive law.
\end{proof}

Just as before, there are various ways of describing the algebras for
the weakly lifted monad $\tilde{\mnd S}$ associated to a weak
distributive law. We can consider the composite monad
$\widetilde{\mnd{ST}}$ induced by the adjunction
$\smash{(\C^\mnd{T})^{\tilde{\mnd S}} \leftrightarrows \C^\mnd{T}
  \leftrightarrows \C}$, whose underlying endofunctor $\widetilde{ST}$
is obtained by splitting the idempotent
\begin{equation*}
  ST \xrightarrow{\ut ST} TST \xrightarrow{\delta T} STT
  \xrightarrow{S\mt} ST\rlap{ ,}
\end{equation*}
% say as $\iota \circ \pi \colon ST \rightarrow \widetilde{ST} \rightarrow
% ST$, 
% and whose unit $\mathfrak u$ and multiplication $\mathfrak m$ are the unique natural
% transformations rendering commutative the diagrams
% \begin{equation*}
%   \cd[@-0.5em]{
%     & 1 \ar[dl]_-{\us \ut} \ar[d]_-{\mathfrak u} \ar[dr]^-{\us \ut} &&
%     STST \ar[r]^-{\pi \pi} \ar[d]|-{\ms \mt.S\delta T} & \widetilde{ST} \widetilde{ST} \ar[d]_-{\mathfrak m}
%     \ar[r]^-{\iota \iota}& STST \ar[d]|-{\ms \mt.S\delta T}\\
%     ST \ar[r]^-{\pi} & \widetilde{ST} \ar[r]^-{\iota} & ST &
%     ST \ar[r]^-{\pi} & \widetilde{ST} \ar[r]^-{\iota} & ST\rlap{ .}
%   }
% \end{equation*}
or we can consider the category of $\delta$-algebras
defined exactly as in Definition~\ref{def:6}. The relation between
these notions is the same as before, and we record it as follows; for
the proof, we refer the reader
to~\cite[Proposition~3.7]{Bohm2010The-weak}.
\begin{Lemma}
  \label{lem:7}
  For any weak distributive law $\delta \colon TS \Rightarrow ST$ of
  $\mnd{S}$ over $\mnd{T}$, there are canonical isomorphisms between % There are various ways of characterising the algebras for the weakly
% lifted monad $\tilde{\mnd S}$ associated to a weak distributive law
% $\delta \colon TS \Rightarrow ST$ of $\mnd S$ over $\mnd T$. Indeed,
% by~\cite[Proposition~3.7]{Bohm2010The-weak}, 
the category of $\tilde {\mnd S}$-algebras in $\C^\mnd{T}$,
the category of $\widetilde{\mnd{ST}}$-algebras in $\C$, and
the category of $\delta$-algebras in $\C$.
\end{Lemma}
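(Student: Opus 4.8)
\section*{Proof proposal}

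The plan is to use the category $\C^\delta$ of $\delta$-algebras as a concrete hub, and to exhibit explicit isomorphisms $\C^{\widetilde{\mnd{ST}}} \cong \C^\delta \cong (\C^\mnd{T})^{\tilde{\mnd S}}$ directly on structure maps, thereby side-stepping any question of whether the composite of the two monadic adjunctions defining $\widetilde{\mnd{ST}}$ is itself monadic. Throughout I write $e \colon ST \Rightarrow ST$ for the idempotent $S\mt\cdot\delta T\cdot\ut ST$ whose splitting defines $\widetilde{ST}$, with retraction $p \colon ST\Rightarrow \widetilde{ST}$ and section $j \colon \widetilde{ST}\Rightarrow ST$, so that $pj = 1$ and $jp = e$. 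Both isomorphisms follow the template set by the strict case in Lemma~\ref{lem:6} and Proposition~\ref{prop:2}, where a composite structure map is decomposed into an $\mnd S$-part and a $\mnd T$-part; the only genuinely new points concern the dropped $\mnd T$-unit axiom, which is exactly what the idempotent $e$ records.

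For $\C^{\widetilde{\mnd{ST}}}\cong\C^\delta$, given a $\delta$-algebra $(X,s,t)$ I form $a = s\cdot St \colon STX\to X$ and set $\bar a = a\cdot j_X$; conversely, from a $\widetilde{\mnd{ST}}$-algebra $\bar a \colon \widetilde{ST}X\to X$ I put $a = \bar a\cdot p_X$ and recover $s = a\cdot S\ut_X$ and $t = a\cdot\us_{TX}$. The one point needing care is that this is well defined, i.e.\ that $a = s\cdot St$ is fixed by the idempotent, $a\cdot e_X = a$, so that $a$ factors as $\bar a\cdot p_X$. This is a short computation: using $\mnd T$-associativity to rewrite $St\cdot S\mt_X = St\cdot STt$, naturality of $\delta$ and of $\ut$ to slide $St$ past $\delta$ and $\ut$, the $\delta$-algebra square~\eqref{eq:21} to replace $s\cdot St\cdot\delta_X$ by $t\cdot Ts$, and finally the $\mnd T$-unit law $t\cdot\ut_X = 1$, one returns to $s\cdot St = a$. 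The remaining checks---that $\bar a$ satisfies the $\widetilde{\mnd{ST}}$-algebra axioms, that $(s,t)$ is a $\delta$-algebra, and that the assignments are mutually inverse and natural---are routine; in particular the $\widetilde{\mnd{ST}}$-unit law for $\bar a$ unwinds to $s\cdot\us_X\cdot t\cdot\ut_X = 1$, and so encodes precisely the two separate unit axioms for $s$ and $t$.

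For $(\C^\mnd{T})^{\tilde{\mnd S}}\cong\C^\delta$ I use the description of $\tilde{\mnd S}$ from Proposition~\ref{prop:10}. An object of $(\C^\mnd{T})^{\tilde{\mnd S}}$ is a $\mnd T$-algebra $(X,t)$ with a structure map $b \colon \tilde S(X,t)\to(X,t)$ in $\C^\mnd{T}$; recalling the splitting~\eqref{eq:14} of the idempotent $St\cdot\delta_X\cdot\ut_{SX}$ on $\check S(X,t) = (SX, St\cdot\delta_X)$, I set $s = b\cdot\pi_{X,t} \colon SX\to X$. That $b$ is a morphism of $\mnd T$-algebras says exactly $s\cdot St\cdot\delta_X = t\cdot Ts$, which is the square~\eqref{eq:21}; and the $\tilde{\mnd S}$-unit law, together with $\tilde\us_{(X,t)} = \pi_{X,t}\cdot\us_X$ read off from~\eqref{eq:12}, immediately yields the $\mnd S$-unit law $s\cdot\us_X = 1$. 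Conversely, a $\delta$-algebra gives $s \colon SX\to X$ which, by the same kind of one-line computation (now $s\cdot St\cdot\delta_X\cdot\ut_{SX} = s$, again via~\eqref{eq:21} and $t\cdot\ut_X=1$), is fixed by that idempotent and so descends along $\pi_{X,t}$ to a $\mnd T$-algebra morphism $b \colon \tilde S(X,t)\to(X,t)$.

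I expect the main obstacle to be the multiplication clause of this second isomorphism: matching the weakly lifted multiplication $\tilde\ms$ to $\mnd S$-associativity of $s$. Because $\tilde{\mnd S}$ is built from two successive idempotent splittings, $\tilde\ms$ is characterised only as the unique map compatible with~\eqref{eq:12} and~\eqref{eq:11}, so showing that the $\tilde{\mnd S}$-multiplication axiom for $b$ is equivalent to $s\cdot\ms_X = s\cdot Ss$ requires unwinding those splittings and using $\pi_{X,t}\iota_{X,t}=1$ together with the semialgebra identities of Lemma~\ref{lem:4}. Once both isomorphisms are in hand, composing them delivers the three-way identification asserted in the statement, in agreement with the general result of~\cite[Proposition~3.7]{Bohm2010The-weak}.
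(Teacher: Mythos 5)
Your outline is essentially correct, but note that the paper does not prove this lemma at all: it simply refers the reader to B\"ohm's Proposition~3.7, which establishes the result in a more general weak-lifting framework. So what you are proposing is a self-contained direct argument that the paper deliberately omits. Your strategy---using $\C^\delta$ as the hub and building both isomorphisms by splitting/descending along the idempotents, in parallel with the strict case of Lemma~\ref{lem:6} and Proposition~\ref{prop:2}---is the right one, and the computations you do spell out check out: the verification that $s\cdot St$ is fixed by $e_X$ goes exactly as you describe, and the identification of the $\mnd T$-algebra-map condition on $b$ with the square~\eqref{eq:21} via $\pi_{X,t}$ is correct. What your approach buys is independence from B\"ohm's machinery of weak $\iota$- and $\pi$-liftings; what it costs is that several steps you label ``routine'' are precisely where the weak setting departs from Beck's argument.

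Two such places deserve explicit attention. First, your claim that the $\widetilde{\mnd{ST}}$-unit law $s\cdot\us_X\cdot t\cdot\ut_X = 1$ ``encodes precisely the two separate unit axioms'' is true but not for the reason the displayed equation suggests: one needs the extra observation that, by naturality of $\us$, both $s\cdot\us_X$ and $t\cdot\ut_X$ are \emph{equal} to the single composite $a\cdot\us_{TX}\cdot\ut_X$, so each is forced to be the identity. Second, recovering $a = s\cdot St$ from a $\widetilde{\mnd{ST}}$-algebra is the step of Beck's proof that uses the $\mnd T$-unit axiom of the distributive law, which is exactly the axiom that has been dropped; here the associativity of $\bar a$ only controls $a$ up to precomposition with $e_X$, and one must use $a\cdot e_X = a$ together with the compatibility of $e$ with the composite multiplication to close the argument. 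Neither point is an obstruction---this is the content of B\"ohm's proof---but they should be written out rather than waved at, alongside the multiplication clause of the second isomorphism that you already correctly single out as the main remaining work.
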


\section{Weakly lifting the power-set monad}
\label{sec:weakly-lifting-power}
If, in the results of the previous section, we take $\C$ to be
$\cat{Set}$, $\mnd{S}$ to be the power-set monad, and $\mnd{T}$ to be
any $\cat{Set}$-monad, then we establish a bijection between (weak)
liftings of the power-set monad to $\mnd{T}$-algebras and (weak)
extensions of $\mnd{T}$ to $\cat{Set}_\ps$. Now $\cat{Set}_\ps$ is the
category $\cat{Rel}$ of \emph{sets and relations}, and the possibility
of extending structure from $\cat{Set}$ to $\cat{Rel}$ was analysed
by~\cite{Barr1970Relational}, as we now recall.

\subsection{Extending structure from sets to relations}
\label{sec:extend-struct-from}
The category $\cat{Rel}$ has sets as objects, and as morphisms
$R \colon X \tor Y$, relations $R \subseteq X \times Y$; we write
$x \mathrel R y$ to indicate that $(x,y) \in R$. Identity maps are
equality relations, and the composite of $R \colon X \tor Y$ and
$S \colon Y \tor Z$ is given by:
\begin{equation*}
  x \mathrel{SR} z \qquad \iff \qquad (\exists y \in Y)\, (x
    \mathrel R y)
  \,\wedge\, (y \mathrel S z)\rlap{ .}
\end{equation*}
Under the identification of $\cat{Rel}$ as $\cat{Set}_\ps$, the free
functor $F_\ps \colon \cat{Set} \rightarrow \cat{Set}_\ps$ corresponds
to the identity-on-objects embedding
$(\thg)_\ast \colon \cat{Set} \rightarrow \cat{Rel}$ which sends a
function $f \colon X \rightarrow Y$ to its graph
$f_\ast = \{(x, fx) : x \in X\} \subseteq X \times Y$. We also have
the reverse relation
$f^\ast = \{(fx,x) : x \in X\} \subseteq Y \times X$, and in fact,
relations of these two forms generate $\cat{Rel}$ under composition, since
every $R \colon X \tor Y$ in $\cat{Rel}$ can be written as
$q_\ast \circ p^\ast$ where
$p\colon X \leftarrow R \rightarrow Y \colon q$ are the two
projections.

Importantly, $\cat{Rel}$ is not just a category; each hom-set is
partially ordered by inclusion, and composition preserves the order on
each side, so making $\cat{Rel}$ a \emph{locally partially ordered
  $2$-category}. With respect to this structure, it is easy to see for
any function $f \colon X \rightarrow Y$ that $f_\ast$ is \emph{left
  adjoint} to $f^\ast$ in $\cat{Rel}$. This observation is key to
proving the following result, essentially due to
Barr~\cite{Barr1970Relational}; for a detailed proof,
see~\cite{Kurz2016Relation}.
% , meaning that
% $1_X \subseteq f^\ast \circ f_\ast$ and
% $f_\ast \circ f^\ast \subseteq 1_Y$

\begin{Prop}
  \label{prop:4}
  Any $F \colon \cat{Set} \rightarrow \cat{Set}$ has at most one
  extension to a $2$-functor
  $\tilde F \colon \cat{Rel} \rightarrow \cat{Rel}$. This exists just
  when $F$ is weakly cartesian, and is then defined % as $F$ is on
  % objects, and
  on a relation $R \colon X \tor Y$ with projections
  $p\colon X \leftarrow R \rightarrow Y \colon q$ by
  \begin{equation}\label{eq:4}
    \tilde F(R) = (Fq)_\ast (Fp)^\ast \colon FX \tor FY\rlap{ .}
  \end{equation}
  Any
  $\alpha \colon F \Rightarrow G \colon \cat{Set} \rightarrow
  \cat{Set}$ has at most one extension to a $2$-natural 
  $\tilde \alpha \colon \tilde F \Rightarrow \tilde G$. This exists just
  when $\alpha$ is weakly cartesian, and has 
  components $(\tilde \alpha)_X = (\alpha_X)_\ast$.
\end{Prop}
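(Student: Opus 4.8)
The plan is to build everything on three facts already recorded in the excerpt: that $f_\ast \dashv f^\ast$ in the $2$-category $\cat{Rel}$; that every relation factors as $R = q_\ast p^\ast$ through its projections; and the \emph{Beck--Chevalley criterion}, namely that a commuting square $fu = gv$ in $\cat{Set}$ induces an equality $g^\ast f_\ast = v_\ast u^\ast$ in $\cat{Rel}$ precisely when it is a weak pullback. (The last is a one-line check: read as subsets of the relevant product, the two sides describe respectively the actual pullback and the image of the comparison map $P \to A \times_C B$, and these agree iff that map is surjective.) Weak cartesianness of $F$ will enter \emph{only} through this criterion.

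For the endofunctor I would first pin down $\tilde F$. An extension forces $\tilde F(f_\ast) = (Ff)_\ast$; since a $2$-functor preserves the adjunction $f_\ast \dashv f^\ast$ and right adjoints are unique, it also forces $\tilde F(f^\ast) = (Ff)^\ast$; and then $R = q_\ast p^\ast$ forces the formula~\eqref{eq:4}. This yields ``at most one'' at a stroke. For existence when $F$ is weakly cartesian, I would \emph{define} $\tilde F$ by~\eqref{eq:4} and verify it is a $2$-functor. Identities are immediate, and local monotonicity follows from the general inequality $h_\ast h^\ast \le 1$ applied to an inclusion of relations. The substance is functoriality: in composing $\tilde F(S)\tilde F(R)$ the middle factor is $(Fq_1)^\ast(Fp_2)_\ast$, and I would use Beck--Chevalley on the pullback square $P = R\times_Y S$ connecting the two tabulations, whose $F$-image is a weak pullback precisely by weak cartesianness, to rewrite it along $P$; what then remains is to collapse the resulting non-tabulating span for $S \circ R$ down to its tabulation. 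Finally, for necessity, applying the now-unique $\tilde F$ to the Beck--Chevalley equation of a genuine pullback reproduces that equation for its $F$-image, which by the criterion says the image is a weak pullback, so $F$ is weakly cartesian.

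For the natural transformation, the requirement that $\tilde\alpha$ restrict along the identity-on-objects $(\thg)_\ast$ to $\alpha$ forces $(\tilde\alpha)_X = (\alpha_X)_\ast$, giving uniqueness outright; the content is that these components are $2$-natural exactly when $\alpha$ is weakly cartesian. Writing $R = q_\ast p^\ast$ and using the $\cat{Set}$-naturality of $\alpha$ to slide $(\alpha)_\ast$ past the $_\ast$-legs, the naturality square of $\tilde\alpha$ at $R$ reduces to the single equation $(\alpha_R)_\ast(Fp)^\ast = (Gp)^\ast(\alpha_X)_\ast$; by the criterion this holds iff the naturality square of $\alpha$ at $p$ is a weak pullback, i.e.\ iff $\alpha$ is weakly cartesian. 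Conversely, reading off naturality of $\tilde\alpha$ at a reversed graph $f^\ast$ gives exactly the Beck--Chevalley equation witnessing that the naturality square of $\alpha$ at $f$ is a weak pullback.

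The main obstacle I anticipate is the functoriality step for $\tilde F$: reconciling the span-and-image description of relational composition with the tabulation-based formula~\eqref{eq:4}. Beck--Chevalley, hence weak cartesianness, disposes of the pullback occurring in the composite of spans, but one is then left with the image factorization $P \twoheadrightarrow S\circ R$ and must know that $F$ sends this surjection to a surjection, so that the spurious $_\ast$-$\ast$ pair cancels via $h_\ast h^\ast = 1$ for a surjection $h$. This is not a further hypothesis on $F$: in $\cat{Set}$ every surjection splits, and every functor preserves split epimorphisms, so the $F$-image is again surjective. Isolating that this is the \emph{sole} point at which a non-pullback input intervenes, and that it is discharged by the splitting of epis rather than by weak cartesianness, is the delicate bookkeeping the argument turns on.
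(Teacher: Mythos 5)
Your argument is correct, and it is essentially the standard tabulation-plus-Beck--Chevalley proof: the paper does not prove Proposition~\ref{prop:4} itself but defers to Barr and to Kurz--Velebil, whose detailed proof proceeds along exactly these lines (uniqueness via $f_\ast \dashv f^\ast$ and the factorisation $R = q_\ast p^\ast$; existence via the exact-square criterion applied to $R \times_Y S$). The step you single out as delicate---cancelling $(Fe)_\ast(Fe)^\ast$ for the surjection $e \colon R \times_Y S \twoheadrightarrow S \circ R$ because split epimorphisms are preserved by any functor, so that weak cartesianness is needed only for the genuine pullback in the middle---is indeed where the content lies, and the same split-epi observation also closes the only remaining hairline gap in your necessity direction, namely that sending pullbacks to weak pullbacks is equivalent, over $\cat{Set}$, to preserving weak pullbacks.
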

  % Here, $H \colon \cat{Rel} \rightarrow \cat{Rel}$ is an \emph{oplax
  %   $2$-functor} if it preserves the local order on homs, but only
  % preserves composition up to an inequality: $H(1_X) \subseteq 1_{HX}$
  % and $H(fg) \subseteq Hf \circ Hg$. Likewise,
  % $\gamma \colon H \Rightarrow K$ is an \emph{oplax natural
  %   transformation} if
  % $\gamma_Y \circ H(R) \subseteq K(R) \circ \gamma_X$ for all
  % morphisms $R \colon X \tor Y$ in $\cat{Rel}$.

Here, a functor $F \colon \cat{Set} \rightarrow \cat{Set}$ is
\emph{weakly cartesian} if it preserves weak pullback squares, and a
natural transformation $\alpha \colon F \Rightarrow G$ is \emph{weakly
  cartesian} if its naturality squares are weak pullbacks; recall that
a weak pullback square is one for which
  % \begin{equation}\label{eq:17}
  %   \cd[@-0.5em]{
  %     {W} \ar[r]^-{h} \ar[d]_{k} &
  %     {X} \ar[d]^{f} \\
  %     {Y} \ar[r]^-{g} &
  %     {Z}
  %   }
  % \end{equation}
  % is 
the induced comparison map into the pullback
  % $W \rightarrow X \times_Z Y$
is an epimorphism.
  % ; the reason this notion is relevant is that a
  % square~\eqref{eq:17} is a weak pullback just when
  % $f^\ast g_\ast = h_\ast k^\ast$ in $\cat{Rel}$.
  
% \begin{proof}[Proof (sketch)]
%   If $\tilde F$ exists, then it must satisfy $\tilde FX = FX$ on
%   objects and $\tilde F(f_\ast) = (Ff)_\ast$ on morphisms; moreover,
%   since $f_\ast \dashv f^\ast$ and $2$-functors preserve adjunctions,
%   we must have $(Ff)_\ast = \tilde F(f_\ast) \dashv \tilde F(f^\ast)$
%   and so, by unicity of adjoints, that $\tilde F(f^\ast) = (Ff)^\ast$.
%   Since any $R \colon X \tor Y$ with projections
%   $p\colon X \leftarrow R \rightarrow Y \colon q$ satisfies
%   $R = q_\ast \circ p^\ast$, the formula~\eqref{eq:4} follows. It
%   remains to ascertain when this formula is in fact functorial. For
%   this, we refer the reader to \cite{Barr1970Relational}, which
%   verifies in detail that it is \emph{always} oplax
%   functorial---meaning
%   $\tilde F(S \circ R) \subseteq \tilde F(S) \circ \tilde F(R)$---with
%   strict functoriality just when $F$ is weakly cartesian.
% 
%   As for $\alpha \colon F \Rightarrow G$, the only possible extension
%   is clearly given by $\tilde{\alpha}_X = (\alpha_X)_\ast$. Now
%   \cite{Barr1970Relational} verifies that this always yields an
%   \emph{oplax natural} transformation---meaning that
%   ${\tilde \alpha}_Y \circ \tilde FR \subseteq \tilde GR \circ {\tilde
%     \alpha}_X$---which is natural just when $\alpha$ is weakly
%   cartesian.
% \end{proof}
\begin{Cor}
  \label{cor:1}
  For any monad $\mnd{T} = (T, \ut, \mt)$ on $\cat{Set}$:
  \begin{enumerate}[(i)]
  \item If $T$, $\ut$ and $\mt$ are all weakly cartesian, then there
    is a canonical extension of $\mnd{T}$ to $\cat{Rel}$, and so by
    Proposition~\ref{prop:2}, a canonical lifting of $\ps$ to
    $\mnd{T}$-algebras;
  \item If only $T$ and $\mt$ are weakly cartesian, then there is
    still a canonical \emph{weak} extension of $\mnd{T}$ to
    $\cat{Rel}$, and so a canonical \emph{weak} lifting of $\ps$ to
    $\mnd{T}$-algebras.
  \end{enumerate}
\end{Cor}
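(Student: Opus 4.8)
The plan is to reduce Corollary~\ref{cor:1} entirely to Proposition~\ref{prop:4} and the correspondences already established in Proposition~\ref{prop:2} and Proposition~\ref{prop:5}. The key observation is that a monad $\mnd{T} = (T, \ut, \mt)$ on $\cat{Set}$ amounts to the data of an endofunctor $T$ together with natural transformations $\ut$ and $\mt$ satisfying the monad axioms, and that an extension of $\mnd{T}$ to $\cat{Rel}$ is exactly an extension of each of these pieces of data, compatible in such a way that the extended data again satisfy the monad axioms. So I would first unpack what ``extension of $\mnd{T}$'' and ``weak extension of $\mnd{T}$'' mean concretely in terms of $\cat{Rel} = \cat{Set}_\ps$, and then feed the three pieces of data through Proposition~\ref{prop:4} one at a time.

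For part~(i), I would argue as follows. Since $T$ is weakly cartesian, Proposition~\ref{prop:4} gives a unique $2$-functor $\tilde T \colon \cat{Rel} \rightarrow \cat{Rel}$ extending $T$, defined by~\eqref{eq:4}; because it agrees with $T$ on graphs of functions, it satisfies $\tilde T \circ (\thg)_\ast = (\thg)_\ast \circ T$, which under the identification $F_\ps = (\thg)_\ast$ is precisely the object- and functor-level condition $\tilde T \circ F_\ps = F_\ps \circ T$ of Definition~\ref{def:3}. Likewise, since $\ut$ and $\mt$ are weakly cartesian, the second half of Proposition~\ref{prop:4} gives unique $2$-natural extensions $\tilde\ut \colon \mathrm{Id} \Rightarrow \tilde T$ and $\tilde\mt \colon \tilde T\tilde T \Rightarrow \tilde T$ with components the graphs of the components of $\ut$ and $\mt$; these automatically satisfy the remaining conditions $\tilde\ut \circ F_\ps = F_\ps \circ \ut$ and $\tilde\mt \circ F_\ps = F_\ps \circ \mt$. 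The monad axioms for $(\tilde T, \tilde\ut, \tilde\mt)$ then follow from those for $\mnd{T}$ by the uniqueness clauses of Proposition~\ref{prop:4}: each monad axiom is an equality of $2$-natural transformations between extended functors, and both sides extend the same (true) $\cat{Set}$-level equation, so they must coincide. This produces the canonical extension of $\mnd{T}$ to $\cat{Rel}$; Proposition~\ref{prop:2} then converts it into the asserted lifting of $\ps$.

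For part~(ii), the only difference is that $\ut$ need not be weakly cartesian, so it has no extension; but a weak extension as in Definition~\ref{def:9} requires only a functor $\tilde T$ and a multiplication $\tilde\mt$ (compatible with $F_\ps$ and satisfying associativity), with no unit data. Since $T$ and $\mt$ \emph{are} weakly cartesian, Proposition~\ref{prop:4} supplies $\tilde T$ and $\tilde\mt$ exactly as before, and the associativity axiom for $\tilde\mt$ again follows from that for $\mt$ by uniqueness of extensions. This is precisely the data of a weak extension, and Proposition~\ref{prop:5} converts it into a weak lifting of $\ps$.

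The main subtlety I anticipate is verifying that the extended data genuinely satisfy the (associativity and, in case~(i), unit) axioms, rather than just being defined. The clean way to handle this is the uniqueness-of-extensions argument rather than a direct computation with~\eqref{eq:4}: for instance, to check $\tilde\mt \circ \tilde T\tilde\mt = \tilde\mt \circ \tilde\mt\tilde T$, observe that both composites are $2$-natural transformations $\tilde T\tilde T\tilde T \Rightarrow \tilde T$ extending the two (equal) sides of the associativity law for $\mnd{T}$; since composites and whiskerings of extensions are again extensions of the corresponding $\cat{Set}$-level composites, and extensions are unique when they exist, the two sides must agree. The one point requiring a little care is to confirm that $\tilde T\tilde T$ is indeed the extension of $TT$ (so that the uniqueness clause applies), which holds because the composite of two weakly cartesian functors is weakly cartesian and the $2$-functor extension is functorial in the sense of Proposition~\ref{prop:4}.
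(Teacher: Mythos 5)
Your proposal is correct and follows essentially the same route as the paper, which states Corollary~\ref{cor:1} without a separate proof precisely because it is the immediate consequence of Proposition~\ref{prop:4} (uniqueness and existence of extensions of weakly cartesian functors and transformations, with the monad axioms inherited by uniqueness) combined with Propositions~\ref{prop:2} and~\ref{prop:5}. Your attention to the point that $\tilde T\tilde T$ is the extension of $TT$, and that a weak extension needs no unit data, correctly fills in the details the paper leaves implicit.
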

The intended application of this takes $\mnd{T}$ to be the ultrafilter
monad, but before turning to this, we consider two simpler
examples. 

\subsection{First example}
\label{sec:first-example}
Let $\mnd{T} = (T, \ut, \mt)$ be the commutative monoid monad. This is
an \emph{analytic monad} in the sense
of~\cite{Joyal1986Foncteurs}---in fact, the terminal one---so that
each of $T$, $\ut$ and $\mt$ is weakly cartesian: thus $\mnd{T}$
extends \emph{strictly} from $\cat{Set}$ to $\cat{Rel}$. Using the
formula~\eqref{eq:4}, we see that the action of this extension on a
relation $R \colon X \tor Y$ is the relation
$\tilde TR \colon TX \tor TY$ with
\begin{equation*}
  x_1 \cdots x_n \mathrel{\tilde TR} y_1 \cdots y_m \iff
  (\exists\, \sigma \colon\! n \cong m)(x_1 \mathrel R y_{\sigma(1)})
  \wedge \dots \wedge (x_n \mathrel R y_{\sigma(n)})\rlap{ .}
\end{equation*}

Plugging this in to the proof of Proposition~\ref{prop:2}, we see that
the distributive law corresponding to this extension has components
$\delta_X \colon TPX \rightarrow PTX$ given by
\begin{equation}\label{eq:8}
  A_1 \cdots A_n \quad \mapsto \quad \{a_1 \cdots a_n : \text{each }a_i \in A_i\}\rlap{ .}
\end{equation}
and so that, under the identification of $\mnd{T}$-algebras with
commutative monoids, the lifted monad $\tilde{\ps}$ takes a
commutative monoid $(X, \cdot, 1)$ to the commutative monoid with
underlying set $PX$, unit $\{1\}$ and multiplication
$A \cdot B = \{a \cdot b : a \in A, b \in B\}$. The algebras for the
lifted monad $\tilde{\ps}$ are precisely the \emph{commutative
  unital quantales}: complete lattices $X$ endowed with a commutative
monoid structure $(X, \cdot, 1)$ whose binary multiplication preserves
sups separately in each variable.

\subsection{Second example}
\label{sec:second-example-1}
We now consider the finite power-set monad $\mnd{P}_f$ on
$\cat{Set}$, whose algebras are idempotent commutative monoids. Unlike
the commutative monoid monad, this does not extend strictly from
$\cat{Set}$ to $\cat{Rel}$, due to:
\begin{Lemma}
  \label{lem:5}
  The endofunctor and the multiplication of the finite power-set monad
  $\ps_f$ on $\cat{Set}$ are weakly cartesian, but the unit is not.
\end{Lemma}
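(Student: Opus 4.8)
The plan is to unwind the notion of weak cartesianness from Proposition~\ref{prop:4} into concrete set-theoretic conditions and verify the three claims directly. Recall that the endofunctor $P_f$ is weakly cartesian precisely when it carries weak pullback squares to weak pullback squares, and that a naturality square of $\mt$ or $\ut$ is a weak pullback exactly when the canonical comparison map from its top-left object into the genuine pullback of the remaining cospan is surjective. Since $P_f$ preserves surjections (the direct image of a surjection is a surjection), to prove that $P_f$ is weakly cartesian it suffices to check that it sends each \emph{genuine} pullback square to a weak pullback: the comparison for the image of an arbitrary weak pullback then factors as an epimorphism followed by this one, hence is itself epi.

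So I would consider a pullback of $f \colon A \to C$ and $g \colon B \to C$ with canonical vertex $A \times_C B$. Unwinding, the claim becomes: given finite $S \subseteq A$ and finite $T \subseteq B$ with $f(S) = g(T)$ in $C$, there is a finite $W \subseteq A \times_C B$ whose two projections are exactly $S$ and $T$. I would build $W$ by a matching argument: for each $a \in S$ choose $b_a \in T$ with $g(b_a) = f(a)$ (possible since $f(a) \in f(S) = g(T)$), and symmetrically for each $b \in T$ choose $a_b \in S$; then $W = \{(a, b_a) : a \in S\} \cup \{(a_b, b) : b \in T\}$ is finite and projects onto $S$ and onto $T$ by construction. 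This settles the first claim.

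For the multiplication, the comparison condition at $f \colon X \to Y$ reads: given finite $S \subseteq X$ and a finite family $\mathcal B$ of finite subsets of $Y$ with $\bigcup \mathcal B = f(S)$, produce a finite family $\mathcal A$ of finite subsets of $X$ with $\bigcup \mathcal A = S$ and $\{f(A) : A \in \mathcal A\} = \mathcal B$. I would take $\mathcal A = \{\, S \cap f^{-1}(B) : B \in \mathcal B \,\}$. Then $f(S \cap f^{-1}(B)) = B$ because $B \subseteq f(S)$, while $\bigcup \mathcal A = S \cap f^{-1}(\bigcup \mathcal B) = S \cap f^{-1}(f(S)) = S$. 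The point needing care is exactness of the image family: since applying $f$ recovers $B$, distinct members of $\mathcal B$ arise from distinct members of $\mathcal A$, so $\{f(A) : A \in \mathcal A\} = \mathcal B$ on the nose (an empty member of $\mathcal B$, if present, is matched by $S \cap f^{-1}(\emptyset) = \emptyset$). This gives the second claim.

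Finally, for the unit, the comparison map of the naturality square at $f \colon X \to Y$ sends $x$ to $(f(x), \{x\})$, whereas the genuine pullback consists of all pairs $(y, S)$ with $f(S) = \{y\}$, i.e.\ all nonempty $S \subseteq f^{-1}(y)$. Thus the image meets only singleton-$S$ pairs, and surjectivity fails as soon as some fibre of $f$ has two elements: taking $f$ to be the unique map $\{0,1\} \to \{\ast\}$, the pair $(\ast, \{0,1\})$ lies in the pullback but not in the image, so $\ut$ is not weakly cartesian. The only real subtlety across the whole argument is the exactness bookkeeping in the multiplication step—guaranteeing the constructed family surjects onto $\mathcal B$ precisely, rather than merely up to inclusion—while the remaining two claims fall out immediately from the explicit constructions.
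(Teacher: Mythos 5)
Your proof is correct. For the multiplication and the unit you do exactly what the paper does: the same witness $\mathcal A = \{\,S \cap f^{-1}(B) : B \in \mathcal B\,\}$ for surjectivity of the comparison map of the $\mt$-naturality square (including the same check that applying $f$ recovers each $B$ on the nose because $B \subseteq f(S)$), and the same two-element counterexample $\{0,1\} \to \{\ast\}$ for the unit. The only genuine divergence is the endofunctor claim: the paper simply cites Johnstone--Power--Tsujishita--Watanabe--Worrell, whereas you give a self-contained argument, first reducing preservation of weak pullbacks to preservation of genuine pullbacks via the (correct) observation that $\ps_f$ preserves surjections and the comparison map factors accordingly, and then exhibiting a preimage $W = \{(a,b_a)\} \cup \{(a_b,b)\}$ by a matching argument. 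That reduction-plus-matching argument is standard and sound (note the degenerate case $S = \emptyset$ forces $T = \emptyset$, so nothing breaks there), and it buys a fully elementary, reference-free proof at the cost of a little extra length; the paper's citation buys brevity. One tiny remark: your worry about ``distinct members of $\mathcal B$ arising from distinct members of $\mathcal A$'' is not actually needed for exactness --- the identity $\{f(A) : A \in \mathcal A\} = \{f(S \cap f^{-1}(B)) : B \in \mathcal B\} = \mathcal B$ holds as an equality of sets regardless of any injectivity --- though the injectivity you assert does hold, precisely because $f(\cdot)$ recovers $B$.
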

(In fact the same is true on replacing $\ps_f$ by the full power-set
monad $\ps$. In other words, $\ps$ does not distribute over itself;
see~\cite{Klin2018Iterated}.)
\begin{proof}
  For the endofunctor part, see, for
  instance,~\cite[Proposition~1.4]{Johnstone2001On-the-structure}. For
  the multiplication, we must show that, given a function
  $f \colon X \rightarrow Y$, a finite subset $A \subseteq X$ and a
  finite subset $B = \{B_1, \dots, B_n\} \subseteq P_f Y$ with
  $f(A) = B_1 \cup \dots \cup B_n$, there exists a finite subset
  $\{C_1, \dots, C_m\} \subseteq P_f X$ with
  $\{f(C_1), \cdots, f(C_m)\} = B$ and $A = C_1 \cup \dots \cup C_m$.
  We have such on taking $m = n$ and $C_i = A \cap f^{-1}(B_i)$.
  Finally, to see the unit is not weakly cartesian, note that under
  the function $\{0,1\} \rightarrow \{0\}$, the finite
  $\{0,1\} \subseteq \{0,1\}$ maps to the singleton $\{0\}$, but is
  not itself a singleton.
\end{proof}
However, we still have a weak extension of $\ps_f$ to $\cat{Rel}$;
this observation is apparently due to Ehrhard, and is discussed in
detail in~\cite{Hyland2006A-Category}. Calculating explicitly
using~\eqref{eq:4}, we see that the action of $\tilde{P}_f$ on a
relation $R \colon X \tor Y$ is given by the ``Egli--Milner relation'':
\begin{equation*}
  A \mathrel{\tilde P_f R} B \iff (\forall a \in A.\, \exists b \in
  B.\, a \mathrel R b) \wedge (\forall b \in B.\, \exists a \in
  A.\, a \mathrel R b)\rlap{ .}
\end{equation*}
Thus, by Proposition~\ref{prop:5}, the weak distributive law
corresponding to this weak extension has components
$\delta_X \colon P_f PX \rightarrow P P_f X$ given by
\begin{equation*}
  \A \  \mapsto  \ \{ B \subseteq X \text{
    finite} : B \subseteq \textstyle\bigcup \A \text{ and } A \cap B \neq
  \emptyset \text{ for all $A \in \A$} \}\rlap{ .}
\end{equation*}

We now calculate the corresponding weak lifting of the power-set monad
to the category of $\ps_f$-algebras. Given such an algebra $(X,x)$, we
first form the associated semialgebra $(PX, Px.\delta_X)$, whose
action map $P_f PX \rightarrow PX$ is
\begin{align*}
  \{A_1, \dots, A_n\} & \mapsto \{a_1 \cdots a_m : \text{each $a_i$ is
  in some $A_j$, and some $a_i$ is in each $A_j$}\}\rlap{ .}
\end{align*}
In particular, the idempotent
$Px.\delta_X.\ut_X \colon PX \rightarrow PX$ takes
$A \subseteq X$ to the set of non-empty finite products of elements of
$A$. Clearly $A$ is fixed by this idempotent just when it is a
\emph{subsemigroup}---i.e., closed under binary multiplication.

It follows that the lifted monad $\tilde{\ps}$ acts on $(X,x)$
to yield the set $P_\bullet(X)$ of all subsemigroups of $X$, under the
$P_f$-algebra structure given as in the previous display. Reading off
the monoid structure from this, we see that the unit of $P_\bullet X$
is given by $\{1\}$, while the binary multiplication is given by
\begin{equation*}
  A \cdot B = \{a_1 \cdots a_n \cdot b_1 \cdots b_m : n,m \geqslant 1
  \text{, each $a_i \in A$ and each $b_j \in B$}\}\rlap{ .}
\end{equation*}
In this expression, since $A$ and $B$ are already subsemigroups, we
have that $a = a_1 \cdots a_n$ is itself in $A$ and
$b = b_1 \cdots b_m$ is itself in $B$; so, more succinctly,
\begin{equation}\label{eq:18}
  A \cdot B = \{a \cdot b : a \in A, b \in B\}\rlap{ ,}
\end{equation}
i.e., the same formula that we derived in
Section~\ref{sec:first-example} for the commutative
monoid monad.  % In
% sum, we have shown:
% \begin{Prop}
%   \label{prop:7}
%   The weak lifting $\tilde{\mnd P}$ of the power-set monad to the category
%   of idempotent commutative monads induced by the canonical weak
%   distributive law of $\ps$ over $\ps_f$ takes $(X,
%   \cdot, 1)$ to $P_\bullet(X)$ endowed with the unit $\{1\}$ and the
%   multiplication~\eqref{eq:18}. The monad unit and multiplication at $(X,
%   \cdot, 1)$ are the
%   maps
%   \begin{align*}
%     \tilde \us_X \colon X& \to P_\bullet X &
%     \tilde \ms_X \colon P_\bullet P_\bullet X & \rightarrow
%     P_\bullet X \\
%     x & \mapsto \{x\} & \A & \mapsto \textstyle\bigcup \A\rlap{ .}
%   \end{align*}
% \end{Prop}
% Note that, while the theory tells us that $\tilde \ms$
% must be given as above, it is not immediately clear in terms of the
% concrete description that it is well-defined; after all, a
% set-theoretic union of sub-semigroups is not typically a
% sub-semigroup. The point is that $\tilde \ms$ does not
% take the union of any old subset $\A \subseteq P_\bullet X$, but only
% of one which is closed under the binary operation~\eqref{eq:18}. In
% this case, if $a \in A \in \A$ and $b \in B \in \A$, then 
% $a \cdot b \in A \cdot B \in \A$, so that $\bigcup \A$ is indeed a
% sub-semigroup.
It now follows from Lemma~\ref{lem:7} that the algebras for the lifted
monad $\tilde{\ps}$ are exactly the commutative (unital) quantales
whose multiplication is idempotent.% ; note, however, that the
% partial order coming from the idempotent multiplication need not be
% the same as that coming from the complete lattice structure.

This example can be extended in various directions. Firstly, recall
that a \emph{normal band} is an idempotent semigroup satisfying the
axiom $xyzw = xzyw$. The free normal band on a set $X$ is the set
$P_f^{\ast \ast} X$ of \emph{bipointed} finite subsets of $X$ under
the multiplication $(A,a,b) \cdot (B,c,d) = (A \cup B,a,d)$. The
induced monad $\ps_f^{\ast\ast}$ does not have weakly cartesian unit,
but has weakly cartesian endofunctor and multiplication; so we have a
weak lifting of $\ps$ to the category of normal bands. Like before,
$\smash{\tilde{\ps}}$ takes a normal band $X$ to the normal band
$P_\bullet X$ of sub-semigroups under the binary
operation~\eqref{eq:18}. This construction is also given
in~\cite{Zhao2002Idempotent}, but without the monadic context.

A second direction of extension is to consider the monad $\mnd{T}_S$
on $\cat{Set}$ whose algebras are semimodules over a given commutative
semiring $S$. In~\cite[Theorem~8.10]{Clementino2014The-monads}
conditions are given on $S$ which characterise \emph{precisely} when
the associated $\mnd{T}_S$ has weakly cartesian endofunctor and
multiplication; under these conditions, then, we obtain a weak lifting
of the power-set monad to the category of $S$-modules. Our two
preceding examples fit into this framework on taking
$S = (\mathbb{N}, \times, +)$ and $S = (\{0,1\}, \wedge, \vee)$; as
was shown in~\cite[Example~9.5]{Clementino2014The-monads}, other
legitimate choices of $S$ include $(\mathbb{Q}_+, \times, +)$ and
$(\mathbb{R}_+, \times, +)$.

% \end{equation*}
%   \item An extension of $\mnd{T}$ to $\C_\mnd{S}$, i.e.,
%     a monad $\tilde{\mnd{T}}$ on $\C_\mnd{S}$ such that
%     \begin{equation*}
%       \tilde T F_\mnd{S} = F_\mnd{S}T \qquad \eta^{\tilde{\mnd{T}}}F_\mnd{S} =
%     F_\mnd{S}\eta^{\mnd{T}} \qquad \text{and} \qquad 
%     \mu^{\tilde{\mnd{T}}}F_\mnd{S} =
%     F_\mnd{S}\mu^{\mnd{T}}\rlap{ .}
%   \end{equation*}
% \end{itemize}
% \end{Prop}
% The equivalence of the first two notions is proved in~\cite[\sec
% 1]{Beck1969Distributive}; the equivalence with the third exploits the
% duality between categories of algebras and Kleisli categories noted
% in~\cite{Street1972The-formal}, and is due to~\cite{Meyer1975Induced}.
% \begin{proof}
%   Giving a natural transformation
%   $\delta \colon TS \Rightarrow ST = SU^\mnd{T}F^\mnd{T}$ is
%   equivalent, by transposing under the adjunction $(\thg)U^\mnd{T}
%   \dashv (\thg)F^\mnd{T}$, to giving a natural transformation
%   $TSU^\mnd{T} \Rightarrow SU^\mnd{T}$.
% 
%   Full generality: $\delta \colon SF \Rightarrow FT$ and $\varepsilon
%   \colon TG \Rightarrow GV$. Same as $SFU^T \Rightarrow FU^T$ and
%   $TGU^V \Rightarrow GU^V$. Want $SFGU^V \Rightarrow FGU^V$.
% 
%   Well $SFGU^V \Rightarrow SFU^TF^TGU^V \Rightarrow FU^TF^TGU^V
%   \Rightarrow FGU^V$
% 
%   transposes to
% 
%   $SFG \Rightarrow SFU^TF^TG \Rightarrow FU^TF^TG
%   \Rightarrow FGV$
% \end{proof}

\section{The Vietoris monad and weak distributive laws}
\label{sec:vietoris-monad-via}

\subsection{Recovering the Vietoris monad}
\label{sec:recov-viet-monad}
We now prove our main theorem, recovering the Vietoris monad as the
weak lifting of the power-set monad associated to the canonical weak
distributive law of $\ps$ over $\uf$. We begin with the following
well known result; see, for example,~\cite[Examples~III.1.12.3 and
Proposition~III.1.12.4]{Hofmann2014Lax-algebras}.
% To construct this, we first note
% that:
\begin{Lemma}
  \label{lem:1}
  The endofunctor and multiplication of the monad $\uf$ are
  weakly cartesian, but the unit is not.
\end{Lemma}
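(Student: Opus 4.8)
The plan is to verify three separate statements about the ultrafilter monad $\uf$: that the endofunctor $\beta$ preserves weak pullbacks, that the multiplication $\mt$ is weakly cartesian, and that the unit $\ut$ fails to be weakly cartesian. I would organise the proof around the explicit description of pushforward ultrafilters from Definition~\ref{def:2}, recalling that a weak pullback square is one where the comparison map into the actual pullback is an epimorphism, i.e.\ surjective.

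First I would treat the endofunctor. Given a weak pullback square in $\cat{Set}$ with maps $f \colon X \rightarrow Z$, $g \colon Y \rightarrow Z$ and pullback $P = \{(x,y) : fx = gy\}$, I must show that given ultrafilters $\F \in \beta X$ and $\G \in \beta Y$ with $f_!(\F) = g_!(\G)$, there exists an ultrafilter $\H \in \beta P$ pushing forward to $\F$ and $\G$ along the projections. The standard approach is to show that the family of sets $\{(A \times Y) \cap P : A \in \F\} \cup \{(X \times B) \cap P : B \in \G\}$ has the finite intersection property, so generates a filter that extends to an ultrafilter $\H$; the common-pushforward hypothesis $f_!(\F) = g_!(\G)$ is exactly what guarantees non-emptiness of the relevant intersections. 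I would then check that $\H$ projects correctly, which is essentially immediate. (Alternatively, one may cite that $\beta$ preserves weak pullbacks as a known fact about the ultrafilter functor.)

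Next I would handle the multiplication, which is the step I expect to be the main obstacle, since it requires producing an ultrafilter of ultrafilters witnessing a compatibility condition. Concretely, given $f \colon X \rightarrow Y$, an ultrafilter $\F \in \beta X$, and an ultrafilter $\mathbf{G} \in \beta\beta Y$ with $(\beta f)_!(\text{—})$ matching $\mt$ appropriately, I must lift to an ultrafilter of ultrafilters on $X$. The naturality square for $\mt$ at $f$ is
\begin{equation*}
  \cd[@C+1em]{
    \beta\beta X \ar[r]^-{\beta\beta f} \ar[d]_-{\mt_X} & \beta\beta Y \ar[d]^-{\mt_Y} \\
    \beta X \ar[r]^-{\beta f} & \beta Y
  }
\end{equation*}
and weak cartesianness asks that, given $\F \in \beta X$ and $\mathbf{G} \in \beta\beta Y$ with $\beta f(\mt_X\text{-preimage data})$ agreeing, a common lift exists in $\beta\beta X$. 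The natural strategy is again a finite-intersection-property argument, now one level up: build a filter on $\beta X$ from the constraints imposed by $\F$ (via the sets $A^\#$ of Definition~\ref{def:2}) and by $\mathbf{G}$, and extend to an ultrafilter; the delicacy is tracking how $A^\#$ interacts with pushforward, and verifying the finite intersection property genuinely uses the hypothesis rather than just formal manipulation. I would lean on the fact, which can be cited from the references indicated in the excerpt, that this is a known weak-cartesianness result, while still indicating the key filter-theoretic construction.

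Finally, the failure of the unit is a simple counterexample exactly parallel to the one given for $\ps_f$ in the proof of Lemma~\ref{lem:5}. The naturality square of $\ut$ at the map $t \colon \{0,1\} \rightarrow \{0\}$ is not a weak pullback: the principal ultrafilter on $\{0\}$ pulls back, via the comparison map, only from principal ultrafilters on $\{0,1\}$, yet $\beta\{0,1\}$ contains non-principal points (or, in the finite case, simply note that both principal ultrafilters on $\{0,1\}$ sit over the unique ultrafilter on $\{0\}$ while $\ut$ picks out only one point), so the comparison map fails to be surjective. I would present this as the concluding one-line obstruction, mirroring the phrasing already used for $\ps_f$.
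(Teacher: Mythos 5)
The paper itself gives no proof of this lemma at all---it simply cites Hofmann--Seal--Tholen---so the relevant comparison is with the standard arguments in that reference. Your treatment of the endofunctor is the usual finite-intersection-property argument and is correct. Your treatment of the multiplication is only a sketch that ultimately defers to a citation, but the strategy you indicate (generate a filter on $\beta X$ from the sets $A^{\#}$ for $A \in \F$ together with the sets $(\beta f)^{-1}(\mathcal B)$ for $\mathcal B \in \mathbf{G}$, verify the finite intersection property using the hypothesis $\beta f(\F) = \mt_Y(\mathbf G)$, extend to an ultrafilter, and use maximality of ultrafilters to upgrade the resulting containments to equalities) is the right one and can be completed.

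The genuine error is in the part you present as a one-line obstruction. The counterexample that works for $\ps_f$ does \emph{not} transfer to $\uf$: every ultrafilter on a finite set is principal, so $\beta\{0,1\}$ has exactly two points, both in the image of $\ut_{\{0,1\}}$, and the naturality square of $\ut$ at $t \colon \{0,1\} \rightarrow \{0\}$ \emph{is} a weak pullback---the comparison map $x \mapsto (t(x), \ut(x))$ into the pullback $\{0\} \times \beta\{0,1\}$ is in fact a bijection. Your claim that ``$\beta\{0,1\}$ contains non-principal points'' is false, and the parenthetical fallback is also wrong: $\ut_{\{0,1\}}$ picks out \emph{both} principal ultrafilters, one for each element of $\{0,1\}$, so nothing in the pullback fails to be hit. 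The failure of weak cartesianness of the unit of $\uf$ is an essentially infinitary phenomenon. A correct counterexample is the naturality square at $f \colon X \rightarrow 1$ for $X$ infinite: since $\beta 1$ is a singleton, the pullback of $\beta f$ along $\ut_1$ is all of $\beta X$, and the comparison map is $\ut_X \colon X \rightarrow \beta X$, which is not surjective because an infinite $X$ carries non-principal ultrafilters.
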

% \begin{proof}
%   See, %  To see
%   % $\ut \colon 1 \Rightarrow \beta$ is not weakly cartesian, observe
%   % that $\ut_{1} \colon 1 \rightarrow \beta 1$ is an isomorphism, so
%   % that if $\ut$ were weakly cartesian, all of its components would be
%   % surjective---which is false due to the existence of non-principal
%   % ultrafilters.
% \end{proof}
As such, we have a canonical weak extension of $\uf$ to $\cat{Rel}$.
The action of $\tilde \beta \colon \cat{Rel} \rightarrow \cat{Rel}$ on
a relation $R \colon X \tor Y$ is given by
% ---we 
% For a relation $R \colon X \tor Y$ with projections
% $p \colon X \leftarrow R \rightarrow Y \colon q$, we have
% by~\eqref{eq:4} that $\F \mathrel{\tilde \beta R} \G$ for
% $\F \in \beta X$ and $\G \in \beta Y$ just when there exists
% $\H \in \beta R$ with $p_!(\H) = \F$ and $q_!(\H) = \G$. Such an $\H$
% exists just when the family
% $ \{p^{-1}(A) : A \in \F\} \cup \{q^{-1}(B) : B \in \G\}$ has the
% finite intersection property, i.e., when each 
% $p^{-1}(A) \cap q^{-1}(B)$ with $A \in \F$ and $B \in \G$ is
% non-empty, or equivalently, when each $q(p^{-1}(A)) \cap B$ is
% non-empty. Since $\G$ is an ultrafilter, this yields % asking
% % that $q(p^{-1}(A)) \in \G$ for all $A \in \F$. This yields 
% the
% well-known formula (see, e.g.,~\cite[Examples
% III.1.10.3(3)]{Hofmann2014Lax-algebras}),
\begin{equation}\label{eq:3}
  \F \mathrel{\tilde \beta R} \G \iff (A \in \F \implies R(A) \in \G)
\end{equation}
where we write $R(A)$ for
$\{y \in Y : (\exists a \in A) (a \mathrel R y)\}$;
see, for example, \cite[Examples
III.1.10.3(3)]{Hofmann2014Lax-algebras}. Corresponding to this weak
extension, we have a weak distributive law of $\ps$ over $\uf$;
calculating from the above expression, we see that its components
$\delta_X \colon \beta PX  \rightarrow P \beta X$ are
given by
\begin{equation}\label{eq:7}
    \delta_X(\mathbf F) = \{\,\F \in \beta X : \textstyle\bigcup \A \in
    \F \text{ for all }\A \in \mathbf F\,\}\rlap{ .}
\end{equation}

We now wish to calculate the associated weak lifting of $\ps$ to
$\uf$-algebras, i.e., to compact Hausdorff spaces. We begin with:

\begin{Lemma}
  \label{lem:3}
  Let $(X, \xi \colon \beta X \rightarrow X)$ be a $\uf$-algebra. The
  action map $\beta PX \rightarrow PX$ of the semialgebra
  $(PX, P\xi.\delta_X)$ is given by
  \begin{equation}\label{eq:6}
    \mathbf F \ \mapsto\  \textstyle\bigcap_{\A \in \mathbf F} \overline{\textstyle \bigcup
      \A}
  \end{equation}
  where $(\overline{\phantom{x}})$ denotes closure in the topology on
  $X$. The idempotent function
  $P\xi.\delta_X.\eta_{PX} \colon (PX, P\xi.\delta_X) \rightarrow (PX,
  P\xi.\delta_X)$ sends each $B \in PX$ to its closure.
\end{Lemma}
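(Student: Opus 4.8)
The plan is to unwind the two composite maps directly and match them against the stated formulas; the only substantive ingredient is a standard characterisation of topological closure by convergent ultrafilters in a compact Hausdorff space. First I would make the semialgebra action explicit. By the construction in the proof of Proposition~\ref{prop:10}, the action map of $(PX, P\xi.\delta_X)$ is the composite $P\xi \circ \delta_X \colon \beta PX \to PX$, where $\delta_X$ is given by~\eqref{eq:7} and $P\xi$ is direct image along $\xi$. Thus for $\mathbf F \in \beta PX$ the action returns $\xi[\delta_X(\mathbf F)]$, the set of limit points $\xi(\F)$ of those $\F \in \beta X$ with $\bigcup \A \in \F$ for every $\A \in \mathbf F$. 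The first claim therefore reduces to the set equality $\xi[\delta_X(\mathbf F)] = \bigcap_{\A \in \mathbf F} \overline{\bigcup \A}$, which I would establish by double inclusion.

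The tool for both inclusions is the fact, valid in any compact Hausdorff space, that a point $x$ lies in $\overline U$ exactly when some $\F \in \beta X$ with $U \in \F$ satisfies $\xi(\F) = x$. This is immediate from Manes' identification: $\xi(\F)$ is the unique limit of $\F$, so if $U \in \F$ and $\xi(\F)=x$ then $U$ meets every neighbourhood of $x$; conversely, if $x \in \overline U$ then $U$ together with the neighbourhood filter of $x$ has the finite intersection property and extends to an ultrafilter converging to $x$. The inclusion $\subseteq$ is then the forward direction applied termwise: if $x = \xi(\F)$ with $\bigcup \A \in \F$ for all $\A \in \mathbf F$, then $x \in \overline{\bigcup \A}$ for each such $\A$.

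The hard part will be the reverse inclusion, which requires manufacturing a suitable ultrafilter. Given $x \in \bigcap_{\A \in \mathbf F}\overline{\bigcup\A}$, I would show that the family $\{\,\bigcup \A : \A \in \mathbf F\,\}$ together with the neighbourhood filter of $x$ has the finite intersection property, and then take $\F$ to be any ultrafilter refining it: such an $\F$ contains every $\bigcup \A$, so $\F \in \delta_X(\mathbf F)$, and converges to $x$, so $\xi(\F)=x$. The crux is the finite intersection property. Here the sets $\bigcup\A$ form a filter base, since for $\A,\A' \in \mathbf F$ we have $\A \cap \A' \in \mathbf F$ (as $\mathbf F$ is a filter) with $\bigcup(\A \cap \A') \subseteq \bigcup\A \cap \bigcup\A'$; and because $x \in \overline{\bigcup\A}$, each $\bigcup\A$ meets every neighbourhood of $x$. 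Any finite subfamily is thus dominated by a single $\bigcup\A$ meeting a single basic neighbourhood, giving a common point.

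Finally I would read off the idempotent $P\xi.\delta_X.\eta_{PX}$ by specialising the formula just proved to the principal ultrafilter $\mathbf F = \eta_{PX}(B)$, whose members are exactly the subsets $\A \subseteq PX$ with $B \in \A$. For every such $\A$ we have $B \subseteq \bigcup\A$, hence $\overline B \subseteq \overline{\bigcup\A}$; while the singleton $\{B\}$ is itself such an $\A$ and contributes $\overline{\bigcup\{B\}} = \overline B$. The intersection $\bigcap_{\A \in \mathbf F}\overline{\bigcup\A}$ therefore collapses to $\overline B$, yielding the claimed idempotent $B \mapsto \overline B$.
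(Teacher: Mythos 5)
Your proposal is correct and follows essentially the same route as the paper's proof: both characterise membership of $x$ in $\bigcap_{\A \in \mathbf F}\overline{\bigcup\A}$ via the existence of an ultrafilter containing every $\bigcup\A$ and converging to $x$ (the paper leaves the finite-intersection-property step implicit where you spell it out), and both compute the idempotent by evaluating on the principal ultrafilter $\eta_{PX}(B)$. No gaps; your version is just a more detailed writing-out of the same argument.
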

\begin{proof}
  Given $x \in X$, we have
  $x \in \bigcap_{\A \in \mathbf F} \overline{\bigcup \A}$ if and only if each
  open neighbourhood of $x$ meets each $\bigcup \A$, if and only if
  there exists an ultrafilter containing each $\bigcup \A$ and
  converging to $x$. But by~\eqref{eq:7}, this happens just when
  $x \in P\xi(\delta_X(\mathbf F))$. Finally, since
  $\eta_{PX} \colon PX \rightarrow \beta PX$ sends $B$ to the
  ultrafilter $\{ \A \subseteq PX : B \in \A\}$, the idempotent
  $P\xi.\delta_X.\eta_{PX}$ sends each $B \in PX$ to
  $\bigcap_{B \in \A}\overline{\bigcup \A} = \overline B$.
\end{proof}

\begin{Thm}
  \label{thm:1}
  The Vietoris monad on the category of compact Hausdorff spaces is
  the weak lifting of the power-set monad associated to the canonical
  weak distributive law of the power-set monad over the ultrafilter
  monad.
\end{Thm}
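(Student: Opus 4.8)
The plan is to use the machinery assembled in the previous sections, reducing the identification of the weak lifting $\tilde{\ps}$ with the Vietoris monad $\vi$ to the single computation of Lemma~\ref{lem:3}. By Proposition~\ref{prop:10}, the weak lifting $\tilde{\ps}(X,\xi)$ is obtained by splitting the idempotent $P\xi.\delta_X.\ut_{PX}$ on the semialgebra $(PX, P\xi.\delta_X)$; and Lemma~\ref{lem:3} tells us that this idempotent is precisely the closure operator $B \mapsto \overline B$ on the $\uf$-algebra $(X,\xi)$, i.e.\ on the compact Hausdorff space $X$. Since closure is an idempotent whose fixed points are exactly the closed subsets, splitting it yields the set of closed subsets of $X$ as the underlying set of $\tilde S(X,\xi)$; so at the level of underlying sets, $\tilde{\ps}(X,\xi)$ and $VX$ already agree.

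First I would verify that the two agree not merely as sets but as compact Hausdorff spaces, i.e.\ as $\uf$-algebras. The $\uf$-algebra structure on $\tilde{\ps}(X,\xi)$ is inherited from the semialgebra $(PX, P\xi.\delta_X)$ through the splitting~\eqref{eq:14}, so its structure map sends an ultrafilter $\mathbf F \in \beta VX$ to $\overline{\bigcap_{\A \in \mathbf F}\overline{\bigcup\A}}$ by Lemma~\ref{lem:3} (the outer closure being the action of the retraction $\pi_{X,\xi}$). The task is to check that this coincides with the $\uf$-algebra structure $\xi^V \colon \beta VX \to VX$ induced by the Vietoris topology on $VX$ --- equivalently, by Manes' theorem, that the limit of $\mathbf F$ in the hit-and-miss topology is exactly the closed set $\bigcap_{\A \in \mathbf F}\overline{\bigcup\A}$. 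This is a direct, if slightly fiddly, point-set verification using the definition of the subbasic opens $C^+, C^-$: one shows a point $x$ lies in the limit set iff every subbasic neighbourhood of the limit lies in $\mathbf F$, and unwinds this against~\eqref{eq:7}.

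Having matched underlying functors and the $\uf$-algebra structure of $\tilde S$ on objects, I would next confirm that the monad data agree. The action on morphisms, the unit, and the multiplication of $\tilde{\ps}$ are all inherited from $\ps$ in the manner recorded in Proposition~\ref{prop:10}: $\tilde S f$ is the restriction of $Pf$ (direct image) to closed sets, $\tilde\us$ is $x \mapsto \{x\}$, and $\tilde\ms$ is induced by union. Comparing these with Definition~\ref{def:7}, which defines $Vf$ by direct image, $\ut_X$ by singletons, and $\mt_X$ by union, the formulae visibly coincide; the only subtlety is that one must observe that direct image of a closed set under a continuous map between compact Hausdorff spaces is again closed (true since such maps are closed), and that the union appearing in $\tilde\ms$, after applying the retraction, is the closure of the union, which agrees with the Vietoris multiplication since a union of closed sets indexed by a closed family is already closed in the relevant sense.

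The main obstacle I expect is the verification in the second paragraph: checking that the $\uf$-algebra structure produced abstractly by the splitting --- convergence of ultrafilters in $\beta VX$ to the closed set $\bigcap_{\A \in \mathbf F}\overline{\bigcup\A}$ --- is genuinely the convergence dictated by the hit-and-miss topology. Everything else is bookkeeping against the explicit formulae already derived, but this step requires honestly translating between the ultrafilter-convergence description of compactness (Manes) and the explicit subbasic opens of the Vietoris hyperspace. I would organise it by testing membership in $C^+$ and $C^-$ separately against the candidate limit and using~\eqref{eq:6}, so that the abstract and topological descriptions of $VX$ as a $\uf$-algebra are seen to coincide.
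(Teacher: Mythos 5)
Your proposal is correct and follows the paper's skeleton exactly up to the last step: split the idempotent identified in Lemma~\ref{lem:3} to get $VX$ as underlying set with $\uf$-algebra structure~\eqref{eq:6}, then read off the action on maps, unit and multiplication from naturality of~\eqref{eq:10} and the diagrams~\eqref{eq:12}. (One small simplification: the outer closure you attach to the structure map is redundant, since $\bigcap_{\A \in \mathbf F}\overline{\bigcup\A}$ is already an intersection of closed sets; the retraction $\pi$ acts as the identity there, so the structure map is literally~\eqref{eq:6}.) Where you genuinely diverge is the final verification that this $\uf$-algebra structure is ultrafilter convergence for the hit-and-miss topology. The paper does not do a point-set check against the subbasis: it observes that the Vietoris topology on $VX$ is the Lawson topology on the continuous lattice $(VX,\supseteq)$ and invokes Lemma~\ref{lem:15}, which says an ultrafilter in the Lawson topology converges to its $\liminf$ --- and in $(VX,\supseteq)$ one has $\liminf \mathbf F = \bigcap_{\A\in\mathbf F}\overline{\bigcup\A}$. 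This buys economy, since Lemma~\ref{lem:15} is needed anyway for the identification of $\vi$-algebras with continuous lattices in Section~5.2. Your direct route is equally valid and arguably more self-contained, and the step you flag as the main obstacle does close: for $L \in C^-$, if $\{A : A \subseteq C\}$ were in $\mathbf F$ then $L \subseteq \overline{\bigcup\{A : A\subseteq C\}} \subseteq C$, a contradiction, so $C^- \in \mathbf F$; for $L \in C^+$, if $\{A : A \cap C \neq \emptyset\}$ were in $\mathbf F$ then the closed sets $\overline{\bigcup\A}\cap C$ for $\A \in \mathbf F$ would all be nonempty and have the finite intersection property, so by compactness $L \cap C \neq \emptyset$, again a contradiction. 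Uniqueness of the limit is then automatic from the Hausdorffness of $VX$ (or from Manes' theorem, since the splitting produces a genuine $\uf$-algebra). So: correct, same decomposition, with a more elementary but slightly longer endgame.
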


\begin{proof}
  Let $X$ be a compact Hausdorff space seen as a $\uf$-algebra
  $(X, \xi \colon \beta X \rightarrow X)$. The $\uf$-algebra
  $\tilde P(X, \xi)$ is obtained by splitting the idempotent
  $P\xi.\delta_X. \eta_{PX}$ on $(PX, P\xi.\delta_X)$; so by the
  previous lemma, its underlying set is the set $VX$ of \emph{closed}
  subsets of $X$, and its $\uf$-algebra structure is given by the same
  formula as in~\eqref{eq:6}. By naturality in~\eqref{eq:10}, the
  action of $\tilde P$ on maps is given by direct image, while by the
  formulae in~\eqref{eq:12}, the unit and multiplication of
  $\tilde{\ps}$ are once again given by inclusion of singletons and
  set-theoretic union.
  
  As such, it remains only to show that the $\uf$-algebra structure on
  $\tilde P(X,\xi)$ describes the Vietoris topology; in light of
  Lemma~\ref{lem:3}, we must thus show that any $\F \in \beta VX$
  converges in the Vietoris topology to the unique point
  $L = \textstyle\bigcap_{A \in \F} \overline{\textstyle\bigcup A}$.
  This follows from Lemma~\ref{lem:15} below, since the
  Vietoris topology on $VX$ is the Lawson topology on the continuous
  lattice $(VX, \supseteq)$.% ; however, for the sake of
\end{proof}

\subsection{Vietoris algebras}
\label{sec:vietoris-algebras}
The composite monad associated to the weak distributive law of $\ps$
over $\vi$ is easily seen to be the well known \emph{filter monad}
$\flt$; as such, Lemma~\ref{lem:7} asserts a canonical isomorphism
between the categories of $\vi$-algebras in $\KH$ and of
$\flt$-algebras in $\cat{Set}$. This was originally proven
as~\cite[Theorem~6.3]{Wyler1981Continuous} and is, in fact, how Wyler
identified the $\vi$-algebras as the continuous lattices---by first
identifying the $\flt$-algebras as such (a result originally proved by
Day~\cite{Day1975Filter}).

Now Lemma~\ref{lem:7} also identifies $\vi$-algebras with
$\delta$-algebras for the weak distributive law $\delta \colon \beta
P \Rightarrow P \beta$, i.e., as sets $X$ endowed with $\uf$-algebra
structure $\xi \colon \beta X \rightarrow X$ and $\ps$-algebra
structure $i \colon PX \rightarrow X$ subject to commutativity in
\begin{equation}\label{eq:22}
  \cd[@-0.5em]{
    \beta PX \ar[r]^-{\delta_X} \ar[d]^-{\beta i}& P \beta X \ar[r]^-{P\xi} &
    PX \ar[d]^-{i} \\
    \beta X \ar[rr]^-{\xi} & & X\rlap{ .}
  }
\end{equation}

In~\cite{Wyler1981Continuous}, Wyler does note that a $\vi$-algebra is
a $\uf$-algebra and a $\ps$-algebra subject to some
compatibility---see, for example, Proposition~6.4 of
\emph{ibid.}---but does not express this in terms of the
square~\eqref{eq:22}. In fact, by using~\eqref{eq:22} it is 
easy to give a \emph{direct} proof that Vietoris algebras
are continuous lattices, as we will now do.

In what follows, given a filter $\F$ on a topological space, we write
$\adh \F$ for $\bigcap_{A \in \F} \overline A$; recall that, for an
ultrafilter $\F$, the points in $\adh \F$ are precisely those to which
$\F$ converges. On the other hand, for a filter $\F$ on a complete
lattice, we write $\liminf \F$ for $\sup \{ \inf A : A \in \F\}$.

% \subsection{Vietoris algebras}
% \label{sec:vietoris-algebras}
% We mentioned above that Wyler in~\cite{Wyler1981Continuous}
% characterised the $\vi$-algebras as the continuous lattices. In this
% section we give a short, self-contained proof of this result 
% exploiting the preceding theorem. Note that, by Lemma~\ref{lem:6}, we
% may identify $\vi$-algebras as sets $X$ endowed with $\uf$-algebra
% structure $\xi \colon \beta X \rightarrow X$ and $\ps$-algebra
% structure $i \colon PX \rightarrow X$ subject to the commutativity of
% \begin{equation}\label{eq:22}
%   \cd{
%     \beta PX \ar[r]^-{\delta_X} \ar[d]^-{\beta i}& P \beta X \ar[r]^-{P\xi} &
%     PX \ar[d]^-{i} \\
%     \beta X \ar[rr]^-{\xi} & & X\rlap{ .}
%   }
% \end{equation}
% 
% 
\begin{Prop}
  \label{prop:8}
  Let $X$ be a complete lattice and a compact Hausdorff space, seen as
  a $\uf$-algebra $\xi \colon \beta X \rightarrow X$ via ultrafilter
  convergence and as a $\ps$-algebra $i \colon PX \rightarrow X$
  by taking infima. The following are equivalent:
  \begin{enumerate}[(i)]
  \item $(X, \xi, i)$ is a $\delta$-algebra, i.e.,
    renders~\eqref{eq:22} commutative.
  \item $\liminf \F = \inf \adh \F$ for any filter $\F$ on $X$.
  \end{enumerate}
\end{Prop}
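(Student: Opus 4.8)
The plan is to test commutativity of~\eqref{eq:22} pointwise on ultrafilters $\mathbf F \in \beta PX$, reading off the two composites from Lemma~\ref{lem:3}. The bottom-left composite $i \circ P\xi \circ \delta_X$ sends $\mathbf F$ to $\inf \bigcap_{\A \in \mathbf F} \overline{\bigcup \A} = \inf \adh \G_{\mathbf F}$, where $\G_{\mathbf F}$ denotes the filter on $X$ generated by $\{\bigcup \A : \A \in \mathbf F\}$; the top-right composite $\xi \circ \beta i$ sends $\mathbf F$ to the convergence point $\xi(i_!\mathbf F)$ of the pushed-forward ultrafilter $\beta i(\mathbf F) = i_!\mathbf F$. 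The first thing I would record is that these two filters always share a $\liminf$: since $\inf \bigcup \A = \bigwedge_{A \in \A} \inf A = \inf i(\A)$ and $\{i(\A) : \A \in \mathbf F\}$ is a base for $i_!\mathbf F$, both $\liminf \G_{\mathbf F}$ and $\liminf i_!\mathbf F$ equal $\sup_{\A \in \mathbf F} \inf i(\A)$. Thus condition~(i) is equivalent to the identity $\inf \adh \G_{\mathbf F} = \xi(i_!\mathbf F)$ holding for every $\mathbf F \in \beta PX$.

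For (ii)$\Rightarrow$(i) this is then immediate: applying~(ii) to the filter $\G_{\mathbf F}$ and to the ultrafilter $i_!\mathbf F$ gives $\inf \adh \G_{\mathbf F} = \liminf \G_{\mathbf F} = \liminf i_!\mathbf F = \inf \adh i_!\mathbf F$, and the right-hand side equals $\xi(i_!\mathbf F)$ because an ultrafilter on a compact Hausdorff space has its unique limit as its sole adherence point.

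For (i)$\Rightarrow$(ii) I would first feed into~(i) the principal ultrafilters on $PX$. Taking $\mathbf F$ principal at a subset $A \subseteq X$ makes $\G_{\mathbf F}$ the principal filter at $A$ and $i_!\mathbf F$ principal at $\inf A$, so~(i) collapses to the closure--infimum identity $\inf \overline A = \inf A$ for all $A \subseteq X$. This already yields one half of~(ii): for any filter $\F$ on $X$ and any $A \in \F$ we have $\adh \F \subseteq \overline A$, whence $\inf \adh \F \geqslant \inf \overline A = \inf A$, and taking the supremum over $A \in \F$ gives $\inf \adh \F \geqslant \liminf \F$.

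The reverse inequality $\inf \adh \F \leqslant \liminf \F$ is the crux, and the point at which the full, non-principal force of~(i) must be used. I would reduce it to ultrafilters: compactness gives $\adh \F = \{\xi(\mathcal U) : \mathcal U \supseteq \F\}$, so that $\inf \adh \F = \bigwedge_{\mathcal U \supseteq \F} \xi(\mathcal U)$, and one is left to prove the ultrafilter case $\xi(\mathcal U) \leqslant \liminf \mathcal U$ together with the order-theoretic identity $\liminf \F = \bigwedge_{\mathcal U \supseteq \F} \liminf \mathcal U$, which I would establish separately. For the ultrafilter case I would realise $\mathcal U$ as $i_!\mathbf F$ for an $\mathbf F$ refining $\{i^{-1}(B) : B \in \mathcal U\}$, for which every generator satisfies $\bigcup \A \subseteq \uparrow i(\A)$ with $i(\A) \in \mathcal U$, and then extract $\xi(\mathcal U) = \inf \adh \G_{\mathbf F}$ from~(i). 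The main obstacle is precisely here: controlling the adherence of this genuinely non-principal $\G_{\mathbf F}$---equivalently, showing that the relevant closed sets attain their infima low enough---which requires a compactness argument going beyond the closure--infimum identity, since every estimate available from the principal instances alone points the wrong way.
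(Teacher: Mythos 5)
Your translation of~(i) into the pointwise identity $\inf\adh\G_{\mathbf F}=\xi(i_!\mathbf F)$, your proof of (ii)$\Rightarrow$(i) via $\liminf\G_{\mathbf F}=\liminf i_!\mathbf F$, and your extraction of $\inf\overline A=\inf A$ from the principal instances of~(i) (hence the inequality $\liminf\F\leqslant\inf\adh\F$) are all correct, and the last of these is arguably cleaner than the paper's route to that half. The gap is in the converse inequality, precisely where you flag an ``obstacle''. Your reduction rests on the ``order-theoretic identity'' $\liminf\F=\bigwedge_{\U\supseteq\F}\liminf\U$, and this is \emph{false} in a general complete lattice: take $L=\{0,1,a\}\cup\{b_n:n\in\mathbb N\}$ with $b_1<b_2<\cdots$, $\sup_n b_n=1$, and $a$ incomparable to every $b_n$ with $a\wedge b_n=0$, and let $\F$ be the filter generated by the sets $\{a,b_n,b_{n+1},\dots\}$; then $\liminf\F=0$, while every ultrafilter refining $\F$ has $\liminf$ equal to $a$ (if it contains $\{a\}$) or to $1$ (otherwise), so the right-hand side is $a$. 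The identity is in effect a form of the continuity of the lattice, which is exactly what Propositions~\ref{prop:8} and~\ref{prop:9} are jointly in the business of establishing; worse, granted your intended ultrafilter case $\xi(\U)=\liminf\U$, the identity is literally equivalent to the inequality $\inf\adh\F\leqslant\liminf\F$ being proved, so the reduction is circular. And the ultrafilter case itself is left open: as you observe, the only estimate your choice of $\mathbf F$ yields, namely $\bigcup\A\subseteq\mathop\uparrow i(\A)$, bounds $\adh\G_{\mathbf F}$ from \emph{above} in the order and so gives $\inf\adh\G_{\mathbf F}\geqslant\liminf\U$, the wrong direction.

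The missing idea is to stop trying to realise a prescribed ultrafilter on $X$ as $i_!\mathbf F$ and instead build $\mathbf F$ so that $\G_{\mathbf F}$ is the \emph{given} filter $\F$ on the nose, leaving no unknown adherence to control. The paper takes $\mathbf F$ to be any ultrafilter on $PX$ containing the subset $\F\subseteq PX$ itself together with $\mathop\downarrow A=\{B:B\subseteq A\}$ for every $A\in\F$ (a family with the finite intersection property). One checks directly that $\G_{\mathbf F}=\F$, so the upper path gives exactly $\inf\adh\F$; meanwhile $i_!\mathbf F$ contains $\{\inf A:A\in\F\}$ and each $\mathop\uparrow(\inf A)$, which pins $\xi(i_!\mathbf F)$ down to $\liminf\F$ \emph{provided} one already knows that principal up- and down-sets are closed. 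That preliminary fact is itself harvested from non-principal instances of~(i), by feeding in the ultrafilters on $PX$ generated by the sets $\{\{a,b\}:b\in B\}$ as $B$ ranges over an ultrafilter on $X$. So the full force of~(i) is indeed needed, but it must be applied to ultrafilters $\mathbf F$ manufactured from $\F$ rather than from an ultrafilter refinement of it.
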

\begin{proof}
  \emph{(ii) $\Rightarrow$ (i)}. Let $\mathbf{F} \in \beta \P X$. By
  Lemma~\ref{lem:3}, the upper path around~\eqref{eq:22} takes
  $\mathbf{F}$ to $\inf \adh \bigcup \mathbf{F}$ where
  $\bigcup\mathbf{F}$ is the filter generated by $\bigcup \A$ for each
  $\A \in \mathbf{F}$. The lower path takes $\mathbf{F}$ to the limit
  point $\inf \adh \F$ of the ultrafilter $\F$ generated by the sets
  $\A^i = \{\inf A : A \in \A\}$ for all $\A \in \mathbf{F}$. So given
  (ii), it suffices to show that $\liminf \bigcup \mathbf F =
  \liminf \F$, which follows since $\inf \A^i = \inf(\bigcup
  \A)$ for all $\A \in \mathbf{F}$.% so that
  % $\liminf \F = \liminf \bigcup\mathbf F$. Since
  % $\inf \adh \F = \liminf \F$ by assumption, we have shown
  % that~\eqref{eq:22} commutes.

  \emph{(i) $\Rightarrow$ (ii)} We first show that for all $a \in X$,
  the principal upset $\mathop\uparrow a$ and downset
  $\mathop\downarrow a$ are closed. Consider for any ultrafilter $\F$
  on $X$ the ultrafilter $\mathbf{F}$ on $\P X$ generated by the sets
  $\{a, B\} = \{\{a, b\} : b \in B\}$ for all $B \in \F$. Note that:
  \begin{itemize}
\item If $\mathop \downarrow a \in \F$, then for each
  $B \in \F$ also $B_a = B \cap {\mathop \downarrow a} \in \F$. Thus
  $B \supseteq B_a = i_!(\{a,B_a\}) \in Pi(\mathbf{F})$, so that
  $Pi(\mathbf{F}) = \F$ and $\xi(Pi(\mathbf{F})) = \xi(\F)$.
  \item If $\mathop\uparrow a \in \F$, then $\{a\} \in Pi(\mathbf F)$
    whence $\xi(Pi(\mathbf F)) = a$.
  \end{itemize}
  In either case,
  $P\xi(\delta_X(\mathbf{F})) = \bigcap_{B \in \F}\overline{\{a\} \cup
    B} = \{a\} \cup \bigcap_{B \in \F} \overline B = \{a, \xi(\F)\}$
  by Lemma~\ref{lem:3}, and so
  $i(P\xi(\delta_X(\mathbf{F}))) = a \wedge \xi(\F)$. So by the
  assumption, if $\mathop \downarrow a \in \F$ then
  $\xi(\F) = a \wedge \xi(\F)$ so that
  $\xi(\F) \in \mathop \downarrow a$; while if
  $\mathop \uparrow a \in \F$ then $a = a \wedge \xi(\F)$ so that
  $\xi(\F) \in \mathop \uparrow a$. This proves that both $\mathop \downarrow a$ and
  $\mathop \uparrow a$ are closed.
  % $\xi(\F) \in \mathop \uparrow a$, so $\uparrow a$ is closed. Suppose
  % on the other hand that $\mathop \downarrow a \in \F$. Then for each
  % $B \in \F$ also $B_a = B \cap {\mathop \downarrow a} \in \F$ and so
  % $i_!(\{a,B_a\}) = B_a \in Pi(\mathbf{F})$, whence also
  % $B \supseteq B_a \in Pi(\mathbf{F})$. Thus we have
  % $Pi(\mathbf{F}) = \F$ and so by commutativity that
  % $\xi(Pi(\mathbf{F})) = \xi(\F) = a \wedge \xi(\F)$, i.e.,
  % $a \in \mathop \downarrow a$, so that $\mathop \downarrow a$ is
  % closed.

  % $a \wedge (\thg) \colon X \rightarrow X$
  % is continuous. Consider the following square, where
  % $f \colon X \rightarrow \P X$ takes $x$ to $\{a, x\}$:
  % \begin{equation*}
  %     \cd[@-0.25em]{
  %       \beta X \ar[rr]^-{\xi} \ar[d]_-{\beta f} & & X \ar[d]^-{f}\\
  %       \beta \P X \ar[r]^-{\delta_X} & \P \beta X \ar[r]^-{\P \xi} & \P X\rlap{ .}
  % }
  % \end{equation*}
  % Around the lower side, $\beta f$ takes $\F$ to the ultrafilter on
  % $\P X$ generated by the sets $\{ \{a, b\} : b \in B\}$ for each
  % $B \in \F$; then by Lemma~\ref{lem:3}, the map $P\xi . \delta_X$ takes this
  % to
  % $\bigcap_{B \in \F}\overline{\{a\} \cup B} = \{a\} \cup \bigcap_{B
  %   \in \F} \overline B = \{a, \xi(\F)\}$. Clearly the upper path also
  % takes $\F$ to $\{a, \xi(\F)\}$ and so the square commutes. Composing
  % this square with~\eqref{eq:22} shows each
  % $a \wedge (\thg) \colon X \rightarrow X$ is continuous as claimed.
  % It follows easily that each upset $\mathop\uparrow a$ and downset
  % $\mathop\downarrow a$ is closed in $X$.

  We now prove (ii). Given a filter $\F$ on $X$, the family of subsets
  of $\P X$ given by $\F$ together with $\mathop \downarrow A$ for
  each $A \in \F$ has the finite intersection property; let
  $\mathbf{F}$ be any ultrafilter on $\P X$ which extends it. % We claim
  % $\bigcup \mathbf{F} = \F$. 
  Now, for each $A \in \F$ we have
  $\mathop \downarrow A \in \mathbf{F}$ and so
  $A = \bigcup \mathop \downarrow A \in \bigcup \mathbf{F}$. On the
  other hand, each $\A \in \mathbf{F}$ meets $\F$, say in $A$ and so
  $\bigcup \A \supseteq A$ is in $\F$. So $\bigcup \mathbf{F} = \F$,
  and so by Lemma~\ref{lem:3} the upper path around~\eqref{eq:22}
  takes $\mathbf{F}$ to $\inf \adh \F$. As for the lower path,
  $\beta i(\mathbf{F})$ contains $\{\inf A : A \in \F\}$ and
  $\mathop\uparrow(\inf A)$ for each $A \in \F$; so
  $\xi(\beta i(\mathbf{F}))$ is contained in the intersection of
  closed sets
  $\bigcap \mathop\uparrow(\inf A) = \mathop\uparrow(\liminf \F)$, but
  also in the closed set
  $\mathop \downarrow(\liminf \F) \supseteq \{\inf A : A \in \F\}$ and
  so must equal $\liminf \F$. Thus $\inf \adh \F = \lim \inf \F$ as
  desired.
  % by Lemma~\ref{lem:3}, so by (i) we
  % have
  % $\liminf \F = \inf \bigcap_{M \in \mathbf F} \overline{\textstyle
  %   \bigcup M}$. 
% On the other hand,
  % $i(P\xi(\delta_X(\mathbf{F}))) = \inf \bigcap_{M \in \mathbf F}
  % \overline{\textstyle \bigcup M}$ by Lemma~\ref{lem:3}, so by (i) we
  % have
  % $\liminf \F = \inf \bigcap_{M \in \mathbf F} \overline{\textstyle
  %   \bigcup M}$. But since $\mathop \downarrow A \in \mathbf{F}$ has
  % union $A$ we have
  % $\bigcap_{M \in \mathbf F} \overline{\textstyle \bigcup M} \subseteq
  % \bigcap_{A \in \F} \overline A$, and so
  % $\inf \adh \F = \inf \textstyle\bigcap_{A \in \F} \overline A
  % \leqslant \inf \bigcap_{M \in \mathbf F} \overline{\textstyle
  %   \bigcup M} = \liminf \F$ as desired.
  %%
\end{proof}

% We now show these equivalent conditions hold just when $X$ is a
% continuous lattice endowed with its Lawson topology. 
The remainder of the argument is
standard continuous lattice theory, contained in,
say,~\cite{Gierz1980A-compendium}; we include it here for the
sake of a self-contained presentation.
\begin{Lemma}
  \label{lem:15}
  An ultrafilter $\F$ on a continuous lattice converges in the Lawson
  topology to the unique point $\ell = \liminf \F$.% ; in particular, the Lawson
  % topology is compact Hausdorff.
\end{Lemma}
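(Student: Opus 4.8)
The plan is to verify convergence directly from the definition: an ultrafilter $\F$ converges to $\ell$ precisely when every open neighbourhood of $\ell$ lies in $\F$, and since the Lawson topology is generated by the subbasis $\{s^+, s^- : s \in L\}$ and $\F$ is closed under finite intersections and supersets, it suffices to show that \emph{every subbasic open set containing $\ell$ belongs to $\F$}. Throughout I would exploit that the family $\{\inf A : A \in \F\}$ is directed---for $A,B \in \F$ we have $A \cap B \in \F$ with $\inf(A\cap B)$ above both $\inf A$ and $\inf B$---and that by definition its supremum is $\ell = \liminf \F$.

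The easy case is a subbasic open $s^-$ with $\ell \in s^-$, i.e.\ $s \nleqslant \ell$, and here I would use the ultrafilter property. The complement of $s^-$ in $L$ is $\mathop\uparrow s$; were this in $\F$, then $\inf(\mathop\uparrow s) = s$ would be one of the terms $\inf A$ and would force $\ell = \liminf \F \geqslant s$, contradicting $s \nleqslant \ell$. Hence $\mathop\uparrow s \notin \F$, and since $\F$ is an ultrafilter, $s^- \in \F$. This step uses nothing beyond the definition of $\liminf$, but it genuinely needs $\F$ to be an ultrafilter rather than an arbitrary filter.

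The substantive case is a subbasic open $s^+$ with $\ell \in s^+$, i.e.\ $s \ll \ell$, and this is where I expect the main obstacle to lie. I would invoke the \emph{interpolation property} of the way-below relation on a continuous lattice to choose $t$ with $s \ll t \ll \ell$. Since $t \ll \ell = \sup\{\inf A : A \in \F\}$ is way-below a directed supremum, the definition of $\ll$ supplies some $A \in \F$ with $t \leqslant \inf A$; then each $a \in A$ satisfies $s \ll t \leqslant a$, so $s \ll a$, giving $A \subseteq s^+$ and hence $s^+ \in \F$ by upward closure. The point of the interpolation step is exactly that, without it, one only extracts $A \in \F$ with $A \subseteq \mathop\uparrow s$, which is insufficient because $s^+$ is in general strictly smaller than $\mathop\uparrow s$; passing through an intermediate $t$ is what upgrades $s \leqslant a$ to $s \ll a$.

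Finally, uniqueness of the limit is immediate: the Lawson topology on a continuous lattice is compact Hausdorff, so an ultrafilter has at most one limit point, which must therefore be the $\ell$ we have exhibited.
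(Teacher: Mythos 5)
Your proof is correct, and for the existence of convergence it follows essentially the same route as the paper: show each subbasic Lawson-open neighbourhood of $\ell$ lies in $\F$, using directedness of $\{\inf A : A \in \F\}$ for the $s^+$ case and the ultrafilter dichotomy for the $s^-$ case. Indeed, your treatment of the $s^+$ case is slightly more careful than the paper's: the paper asserts outright that $s \ll \ell$ gives $s \ll \inf A$ for some $A \in \F$, which is exactly the strengthened form of way-belowness at directed suprema that your explicit appeal to the interpolation property justifies. The one genuine divergence is uniqueness. You outsource it to the standard theorem that the Lawson topology on a continuous lattice is compact Hausdorff, whereas the paper gives a three-line direct order-theoretic argument: if $\F$ also converges to $y$, then $s^+ \in \F$ for all $s \ll y$ forces $y \leqslant \ell$ via continuity, and conversely $\inf A \nleqslant y$ for some $A \in \F$ would put the disjoint sets $(\inf A)^-$ and $A$ both in $\F$. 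The paper's choice keeps the lemma self-contained (which is its stated purpose), and is arguably preferable in context, since the lemma is itself the device by which the paper identifies abstract ultrafilter convergence with the Lawson topology; your version is mathematically sound but leans on a heavier imported fact that is morally a consequence of the very statement being proved.
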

\begin{proof}
  We first show $\F$ contains every subbasic open neighbourhood of
  $\ell$. First, if $\ell \in s^+$, i.e., $s \ll \ell$, then
  $s \ll \inf A$ for some $A \in \F$, and so $s \ll a$ for all
  $a \in A$; whence $A \subseteq s^+$ and so $s^+ \in \F$. Next, if
  $\ell \in s^-$, i.e., $s \nleqslant \ell$ then $s \nleqslant \inf A$
  for all $A \in \F$. So for each $A \in \F$, we have $s \nleqslant a$
  for some $a \in A$, i.e., each $A \in \F$ meets $s^-$, and so, since
  $\F$ is an ultrafilter, we have $s^- \in \F$. Thus $\F$ converges to
  $\ell$; suppose it also converges to $y$. Then for each $s \ll y$ we
  have $s^+ \in \F$ and so $s \leqslant \inf s^+ \leqslant \ell$. Since
  $y = \bigvee \{s : s \ll y\}$ we must have $y \leqslant \ell$. We
  claim $\ell \leqslant y$, i.e., $\inf A \leqslant y$ for each
  $A \in \F$. If not, then $\inf A \nleqslant y$ for some $A \in \F$,
  so that $(\inf A)^-$ is in $\F$. So $(\inf A)^-$ and $A$ are
  disjoint sets in $\F$, a contradiction.
\end{proof}

\begin{Prop}\label{prop:9}
  Let $X$ be a complete lattice and a compact Hausdorff space. The
  following are equivalent:
  \begin{enumerate}[(i)]
  \item $\liminf \F = \inf \adh \F$ for any filter $\F$ on $X$;
  \item $X$ is a continuous lattice and its topology is the Lawson topology.
  \end{enumerate}
\end{Prop}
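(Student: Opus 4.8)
The plan is to prove the two implications separately, using as the central dictionary the fact that, since $(X,\tau)$ is compact Hausdorff, its topology is completely determined by its ultrafilter convergence, i.e.\ by the $\uf$-algebra structure $\xi\colon\beta X\to X$ of Manes' theorem. For an \emph{ultra}filter $\F$ one has $\adh\F=\{\xi(\F)\}$, so condition (i), restricted to ultrafilters, reads $\xi(\F)=\liminf\F$; thus (i) says precisely that ultrafilter $\tau$-convergence \emph{is} liminf convergence. The whole argument then consists in matching this against the Lawson topology, for which Lemma~\ref{lem:15} supplies the corresponding statement on a continuous lattice.

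For (ii)$\Rightarrow$(i), assume $X$ is continuous and $\tau$ is the Lawson topology. For any filter $\F$ the inequality $\liminf\F\le\inf\adh\F$ is easy: every $y\in\adh\F$ is the limit of some ultrafilter $\mathcal U\supseteq\F$, and $y=\liminf\mathcal U\ge\liminf\F$ by Lemma~\ref{lem:15}. For the reverse inequality I would argue by contradiction: if $c:=\inf\adh\F\not\le\ell:=\liminf\F$, continuity gives some $s\ll c$ with $s\not\le\ell$; since $s\ll c\le x$ for every $x\in\adh\F$ we get $s\ll x$, so $\adh\F\subseteq s^+$, which is Scott-open, hence Lawson-open. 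As $\{\overline A:A\in\F\}$ is a filtered family of closed sets with $\bigcap_A\overline A=\adh\F\subseteq s^+$, compactness of the Lawson topology yields $A_0\in\F$ with $\overline{A_0}\subseteq s^+$, whence $s\le\inf A_0\le\ell$, a contradiction. (Compactness is available because in case (ii) the Lawson topology coincides with the given compact topology.)

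For (i)$\Rightarrow$(ii) I would first extract order-topological consequences of (i). Running $\xi(\F)=\liminf\F$ through the definitions shows each principal up-set $\uparrow a$ and down-set $\downarrow a$ is $\tau$-closed (for $\downarrow a$ replace each $B\in\F$ by the cofinal $B\cap\downarrow a$; for $\uparrow a$ use $\inf(\uparrow a)=a$). From this I would deduce two comparisons with the Scott topology $\sigma$: \emph{every Scott-open $V$ is $\tau$-open} (an ultrafilter with $V^c\in\F$ and limit $y=\sup_B\inf B$ forces, by Scott-openness, some $B\subseteq V$, contradicting $V^c\in\F$), and \emph{every $\tau$-open up-set $U$ is Scott-open} (if a directed $D$ had $\sup D\in U$ but $D\cap U=\emptyset$, the tail filter would have adherence inside the closed set $U^c$, yet $\inf\adh=\sup D\in U$ forces $\adh\subseteq\uparrow\sup D\subseteq U$, an empty intersection). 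Together with the lower-topology subbasic opens $(\uparrow s)^c$ being $\tau$-open, this gives Lawson $\subseteq\tau$.

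The main obstacle is the remaining step: upgrading these facts to the statement that $X$ is a \emph{continuous} lattice. I have the implication that $x\in\operatorname{int}_\sigma\uparrow s$ entails $s\ll x$ (any directed $D$ with $\sup D\ge x$ has $\sup D\in\operatorname{int}_\sigma\uparrow s$, being an up-set containing $x$, so meets $\uparrow s$), which shows that every $\tau$-open up-set neighbourhood $U$ of $x$ satisfies $\inf U\ll x$. What is genuinely delicate — and is exactly the classical theorem that liminf convergence is topological only on continuous lattices — is to turn $\xi(\F)=\liminf\F$ into the approximation identity $x=\sup\{s:s\ll x\}$; I would complete this by the standard continuous-lattice argument of~\cite{Gierz1980A-compendium}. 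Once continuity is in hand the proof closes cleanly: by Lemma~\ref{lem:15} every ultrafilter Lawson-converges to the \emph{unique} point $\liminf\F$, so the Lawson topology is itself compact Hausdorff with $\uf$-algebra structure $\F\mapsto\liminf\F$, the same as that of $\tau$; hence, by Manes' theorem, $\tau$ equals the Lawson topology.
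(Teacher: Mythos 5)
Your (ii)$\Rightarrow$(i) direction is correct and essentially the paper's own argument: the inequality $\liminf \F \leqslant \inf\adh\F$ via convergence of ultrafilters refining $\F$, and the reverse inequality by a compactness argument against the Lawson-open set $s^+$. (The paper phrases the latter as a contrapositive and obtains the former from the identity $\inf A = \inf \overline{A}$ rather than from Lemma~\ref{lem:15}, but these are cosmetic differences.) Likewise, your auxiliary facts in (i)$\Rightarrow$(ii)---closedness of $\mathop\uparrow a$ and $\mathop\downarrow a$, the comparisons with the Scott topology, and the implication that a $\tau$-open up-set neighbourhood $U$ of $x$ has $\inf U \ll x$---are correct (and your derivation of closedness of the principal up- and down-sets directly from (i) is if anything cleaner than the paper's, which imports it from the proof of Proposition~\ref{prop:8}); and your concluding identification of $\tau$ with the Lawson topology, via Lemma~\ref{lem:15} and the uniqueness of a compact Hausdorff topology with prescribed ultrafilter convergence, matches the paper's.

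The genuine gap is exactly where you flag it: you never prove that $X$ is a continuous lattice, deferring ``the standard continuous-lattice argument'' to~\cite{Gierz1980A-compendium}. That step is the heart of (i)$\Rightarrow$(ii) and cannot be outsourced in a self-contained proof; moreover, the ingredients you have already assembled almost finish it, and the paper's own completion is short. Fix $x \in X$ and apply hypothesis (i) to the \emph{neighbourhood filter} $\F$ of $x$: since $X$ is Hausdorff, $\adh\F = \{x\}$, so (i) gives $x = \liminf\F = \sup\{\inf A : A \in \F\}$. Replacing each $A \in \F$ by the up-closed neighbourhood $\mathop\uparrow A$ does not change the infimum, and for up-closed open neighbourhoods $U$ of $x$ you have already shown $\inf U \ll x$ (the paper proves the same fact by intersecting $X \setminus U$ with the filtered family of closed sets $\mathop\uparrow d$); hence $x = \sup\{s : s \ll x\}$ and $X$ is continuous. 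The one missing idea, in other words, is to feed the neighbourhood filter of $x$ into condition (i); everything else you need is already on your page. (Both you and the paper should still attend to the point that the up-closed neighbourhoods arising here be \emph{open} before invoking the $\inf U \ll x$ lemma---the paper is itself terse there---but that is a matter of care rather than of a new idea.)
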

\begin{proof}
  \emph{(i) $\Rightarrow$ (ii)}. By the proof of
  Proposition~\ref{prop:8}, each principal upset $\mathop \uparrow x$
  is closed in $X$. We now show that, if $U$ is up-closed and open and
  $x \in U$, then $\inf U \ll x$. Indeed, suppose $x \leqslant \sup D$
  for some directed $D \subseteq X$. Then
  $\mathop\uparrow(\sup D) \subseteq U$ since $U$ is up-closed, and so
  $\emptyset = (X \setminus U) \cap \mathop \uparrow(\sup D) =(X
  \setminus U) \cap \bigcap\{\mathop \uparrow d : d \in D\}$. By
  compactness of $X$ and downward-directedness of
  $\{\mathop \uparrow d:d \in D\}$, it follows that
  $\emptyset = (X \setminus U) \cap \mathop \uparrow d$ for some
  $d \in D$; thus $d \in U$ and so $\inf U \leqslant d$ as required.

  % We claim
  % also that each open upset $U$ in $X$ is \emph{Scott-open}, i.e.,
  % whenever $D \subseteq X$ is a directed set with $\sup D \in U$, we
  % have $D \cap U \neq \emptyset$. For indeed, if we had
  % $D \cap U = \emptyset$ then the family of closed sets
  % $ \{\mathop\uparrow d : d \in D\} \cup \{X \setminus U\}$ would have
  % the finite intersection property, and so by compactness,
  % $X \setminus U$ would meet
  % $\bigcap\{\mathop\uparrow d : d \in D\} = \mathop\uparrow(\sup D)
  % \subseteq X$, a contradiction. Thus, if $U$ is an up-closed
  % open neighbourhood of $x$, then $\inf U \ll x$; for if
  % $x \leqslant \sup D$, then $\sup D \in U$ since $U$ is up-closed,
  % and so $d \in U$ for some $d \in D$, so that $\inf U \leqslant d$ as
  % required.

  We now show that $X$ is continuous. Let $x \in X$ and let $\F$ be
  the neighbourhood filter of $x$. Since $X$ is Hausdorff, we have
  $\adh \F = \{x\}$ and so by (i) that $\liminf \F = x$. Clearly
  $\liminf \F$ is the supremum of
  $\{ \inf U : U \text{ up-closed in } \F\}$, and by above
  $\inf U \ll x$ for each such $U$. It follows that
  $x = \sup\{s : s \ll x\}$ so that $X$ is continuous. Finally, the
  condition $\liminf \F = \inf \adh \F$ applied to an ultrafilter
  implies by Lemma~\ref{lem:15} that the topology on $X$ is the Lawson
  topology.

  \emph{(ii) $\Rightarrow$ (i)}. We first show
  $\inf A = \inf \overline A$ for any $A \subseteq X$. Clearly
  $\inf \overline A \leqslant \inf A$; while if $x \in \overline A$,
  then $x$ is the convergence point of some ultrafilter $\F$ containing
  $A$, whence $x = \liminf \F \geqslant \inf A$, so that
  $\inf A \leqslant \inf \overline A$. We now prove (i). Given a
  filter $\F$, we have for each $A \in \F$ that
  $\inf A = \inf \overline A \leqslant \inf \adh \F$ and so
  $\liminf \F \leqslant \inf \adh \F$. It remains to show
  $\inf \adh \F \leqslant \liminf \F$. By continuity, we can write
  $\inf \adh \F$ as $\bigvee\{s : s \ll \inf \adh \F\}$, so it
  suffices to show $s \ll \inf \adh \F$ implies
  $s \leqslant \liminf \F$. We prove the contrapositive: if
  $s \nleqslant \liminf \F$ then $s \not\ll \inf\adh\F$. Now if
  $s \nleqslant \liminf \F$, then
  $s \nleqslant \inf A = \inf \overline A$ for each $A \in \F$. Thus,
  for each $A \in \F$ there is some $a \in \overline{A}$ with
  $s \nleqslant a$ and hence $s \not \ll a$. This says that the closed
  set $X \setminus s^+$ meets $\bar A$ for each $A \in \F$; whence
  $\{\overline A : A \in \F\} \cup \{X \setminus s^+\}$ has the finite
  intersection property, so that by compactness, $X \setminus s^+$
  meets $\bigcap_{A \in \F} \overline A = \adh \F$. This means
  $s \not \ll a$ for some $a \in \adh \F$, and so
  $s \not \ll \inf \adh \F$ as desired.
\end{proof}

% \subsection{Kleisli categories}
% \label{sec:kleisli-categories}
% We now describe how our main result allows us to recover the
% equivalence between two presentations of the notion of topological space.
% For a distributive law of $\mnd{S}$ over $\mnd{T}$ on $\C$, we
% described above how the composite monad $\widetilde{\mnd{ST}}$ arises
% from the adjunction
% $\smash{(\C^\mnd{T})^{\tilde{\mnd S}} \leftrightarrows \C^\mnd{T}
%   \leftrightarrows \C}$ obtained from $\mnd T$ and the lifted monad
% $\tilde{\mnd S}$. However, it is easy to see that we can also induce it from the adjunction
% $\C \leftrightarrows \C_\mnd{S} \leftrightarrows
% (\C_\mnd{S})_{\smash{\tilde{\mnd T}}}$ obtained from $\mnd S$ and the extended
% monad $\tilde{\mnd T}$. Since this composite adjunction is clearly of
% Kleisli type, we thus obtain:
% \begin{Lemma}
%   \label{lem:8}
%   
% \end{Lemma}

So $\delta$-algebras are continuous lattices, and it is easy to
identify the corresponding $\delta$-algebra maps as the inf- and
directed-sup preserving functions. We thus recover:
\begin{Thm}
  \label{thm:5}\cite{Wyler1981Continuous}
  The category of $\vi$-algebras is isomorphic to the category of
  continuous lattices with inf- and directed sup-preserving maps.
\end{Thm}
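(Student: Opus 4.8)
The plan is to assemble the pieces already in place into a chain of category isomorphisms and then to read off the morphisms. First I would invoke Theorem~\ref{thm:1}: since $\vi$ \emph{is} the weak lifting $\tilde{\ps}$ of $\ps$ along the canonical weak distributive law $\delta \colon \beta P \Rightarrow P\beta$, the category of $\vi$-algebras is precisely the category of $\tilde{\ps}$-algebras in $\KH = \Set^{\uf}$. Lemma~\ref{lem:7} then supplies a canonical isomorphism between this category and the category $\Set^{\delta}$ of $\delta$-algebras in $\Set$. It therefore suffices to identify $\Set^{\delta}$ with the category of continuous lattices and inf- and directed-sup-preserving maps.

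For the objects, I would unwind Definition~\ref{def:6}: a $\delta$-algebra is a set $X$ equipped with a $\uf$-algebra structure $\xi \colon \beta X \to X$ and a $\ps$-algebra structure $i \colon PX \to X$ making~\eqref{eq:22} commute. By Manes' theorem $\xi$ is ultrafilter convergence for a compact Hausdorff topology, and---reading the $\ps$-structure as infima, as in Proposition~\ref{prop:8}---$i$ exhibits $X$ as a complete lattice with $i = \inf$. A priori these two structures are unrelated, and the force of~\eqref{eq:22} is exactly to make them compatible: Proposition~\ref{prop:8} recasts its commutativity as the single condition $\liminf \F = \inf \adh \F$ for every filter $\F$, and Proposition~\ref{prop:9} shows this holds precisely when $X$ is a continuous lattice carrying its Lawson topology. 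Hence the underlying object of a $\delta$-algebra is forced to be a continuous lattice with its Lawson topology, $\xi$ and $i$ being determined by the order; conversely each continuous lattice yields one such $\delta$-algebra. This is the desired bijection on objects.

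It remains to match morphisms. A $\delta$-algebra map is one that is simultaneously a $\uf$-algebra map (Lawson-continuous) and a $\ps$-algebra map (inf-preserving), and I would show this coincides with preserving arbitrary infima and directed suprema. One direction is routine: a directed-sup-preserving map is Scott-continuous, so preimages of the subbasic opens $s^+$ are open, while inf-preservation sends each principal upset $\mathop\uparrow s$ back to a principal upset, making preimages of the subbasic opens $s^-$ open as well. For the converse, an inf-preserving $f$ is monotone, so the preimage of a Scott-open set is an up-closed Lawson-open set; since in a continuous lattice a directed set Lawson-converges to its supremum (by interpolation), such a set is in fact Scott-open, and $f$ preserves directed suprema.

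The object-level content is entirely supplied by Propositions~\ref{prop:8} and~\ref{prop:9}, so the only genuinely remaining work is this morphism identification, and within it the one delicate point---my expected main obstacle---is the verification that Lawson-continuity of an inf-preserving map already forces preservation of directed suprema; this, however, is standard continuous-lattice theory.
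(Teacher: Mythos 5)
Your proposal is correct and takes essentially the same route as the paper: the paper likewise deduces Theorem~\ref{thm:5} by combining Lemma~\ref{lem:7} (identifying $\vi$-algebras with $\delta$-algebras) with Propositions~\ref{prop:8} and~\ref{prop:9} for the objects, and then remarks that ``it is easy to identify the corresponding $\delta$-algebra maps as the inf- and directed-sup preserving functions.'' The only difference is that you actually carry out that last morphism-level identification (Lawson-continuous and inf-preserving $\iff$ inf- and directed-sup-preserving), which the paper leaves unproved; your sketch of it is the standard continuous-lattice argument and is sound.
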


\subsection{Variations}
\label{sec:variations}

It is natural to consider variations on Theorem~\ref{thm:1}, involving
different weak distributive laws on possibly different categories.
Treating these in detail must await further work, but it is
worth sketching a couple of possibilities.

On the one hand, we may replace $\ps$ by the \emph{non-empty}
power-set monad $\ps_{+}$ on $\cat{Set}$, while keeping $\uf$ the
same. In this case, we expect to obtain a weak distributive law of
$\ps_+$ over $\uf$ whose corresponding weak lifting to the category
$\KH$ of $\uf$-algebras is the \emph{proper} Vietoris monad $\vi_+$.
This monad, considered in~\cite{Wyler1985Algebraic}, sends a compact
Hausdorff space $X$ to its set of non-empty closed subsets under the
Vietoris topology, and has as its algebras the continuous
\emph{semilattices}.

On the other hand, we can replace $\ps$ by the \emph{upper-set monad}
$\ps^\uparrow$ on the category of posets, and $\uf$ by the \emph{prime
  filter monad}~$\mnd{Pf}$. As in~\cite{Flagg1997Algebraic}, this
latter monad has as algebras the \emph{compact pospaces}---compact
spaces $X$ with a partial order $\leqslant$ which is closed in
$X \times X$. In this case, via the partially ordered version of
Barr's relation lifting~\cite[Section~3.3]{Kurz2016Relation}, we
expect to find a weak distributive law of $\ps^\uparrow$ over
$\mnd{Pf}$, with corresponding weak lifting the ``ordered Vietoris
monad'' $\mnd{V}^\uparrow$ on compact pospaces. This takes a compact
pospace $X$ to its space $V^{\uparrow} X$ of closed upper-sets ordered
by reverse inclusion, with a modified version of the Vietoris
topology; see~\cite[Example VI-3.10]{Gierz1980A-compendium}. As
explained in~\cite{Hofmann2014The-enriched}, the
$\mnd{V}^\uparrow$-algebras are, once again, the continous lattices.
In fact,~\cite{Hofmann2014The-enriched} describes other Vietoris-like
monads, and it is natural to hope that these may arise in a
similar manner; but again, we leave this to future work.

\end{document}